\flushbottom  \linespread{1.6}
  \theoremstyle{plain}
  \newtheorem{thm}{Theorem}[section]
  \theoremstyle{definition}
  \theoremstyle{plain}
  \newtheorem{lem}[thm]{Lemma}
  \theoremstyle{plain}
  \newtheorem{cor}[thm]{Corollary}
  \theoremstyle{plain}
  \newtheorem{prop}[thm]{Proposition}
  \theoremstyle{remark}
  \newtheorem*{claim*}{Claim}
  \theoremstyle{definition}
  \newtheorem{probs}[thm]{Problems}
\newcommand{\N}{\mathbb{N}}
\newcommand{\Z}{\mathbb{Z}}
\newcommand{\R}{\mathbb{R}}
\newcommand{\E}{\mathbb{E}}
\def\Span{\operatorname{span}}
\def\supp{\operatorname{supp}}
\newcommand{\vp}{\varepsilon}
\def\gb{\overline{g}}
\begin{document}

\title{Unconditional structures of translates for $L_p(\R^d)$}
\author{D. Freeman}
\address{Department of Mathematics and Computer Science\\
St Louis University\\
St Louis, MO 63103  USA} \email{dfreema7@slu.edu}
\author{E. Odell}
\address{Department of Mathematics \\
The University of Texas\\ 1 University Station C1200\\
Austin, TX 78712  USA} \email{odell@math.utexas.edu}
\author{Th. Schlumprecht}
\address{Department of Mathematics, Texas A\&M University\\
College Station, TX 77843, USA}
\email{thomas.schlumprecht@math.tamu.edu}

\author{A. Zs\'ak}
%\address{School of Mathematics, University of Leeds, \\ Leeds, LS2 9JT,  United Kingdom}
%\email{zsak@maths.leeds.ac.uk}
\address{Peterhouse, Cambridge, CB2 1RD, UK}
\email{A.Zsak@dpmms.cam.ac.uk}

\thanks{Research of the first, second, and third author was supported by the
  National Science Foundation.
} \subjclass[2000]{46B20, 54H05, 42C15}
%\keywords{Szlenk index, universal
%space, embedding into FDDs,
%  Effros-Borel structure, analytic classes}

\maketitle

\begin{abstract}

We prove that a sequence $(f_i)_{i=1}^\infty$ of translates of a
fixed $f\in L_p(\R)$ cannot be an unconditional basis of $L_p(\R)$
for any $1\le p<\infty$. In contrast to this, for every
$2<p<\infty$, $d\in\N$ and unbounded sequence
$(\lambda_n)_{n\in\N}\subset \R^d$ we establish the existence of a
function $f\in L_p(\R^d)$ and sequence $(g^*_n)_{n\in\N}\subset
L_p^*(\R^d)$ such that $(T_{\lambda_n} f, g^*_n)_{n\in\N}$ forms an
unconditional Schauder frame for $L_p(\R^d)$. In particular, there
exists a Schauder frame of integer translates for $L_p(\R)$ if (and
only if) $2<p<\infty$.

\end{abstract}

\section{Introduction}

If $d\in\N$ and $\lambda\in\R^d$, the {\em translation operator}
$T_\lambda$ is defined by $T_\lambda f(x)=f(x-\lambda)$ for all
$x\in \R^d$ and $f:\R^d\rightarrow\R^d$.  Note that for the case
$d=1$ and $\lambda>0$, the operator $T_\lambda$ is simply
translation by $\lambda$ units to the right.
Given $1\leq p<\infty$,
$f\in L_p(\R)$, and $\Lambda\subset\R$, the resulting space
$X_p (f,\Lambda) \equiv \overline{\Span\{T_\lambda f\}}_{\lambda\in\Lambda}$ and set
$\{T_\lambda f\}_{\lambda\in\Lambda}$ have been studied in a variety
of contexts and in particular arise in the study of wavelets and
Gabor frames \cite[CDH]{HSWW}.

 Some
of the natural problems to consider when studying translations of a
fixed function $f$ relate to characterizing when can $X_p
(f,\Lambda)  =L_p(\R^d)$ and when can $\{T_\lambda
f\}_{\lambda\in\Lambda}$ be ordered to form a coordinate system such
as a (unconditional) Schauder basis or (unconditional)  Schauder
frame for $L_p(\R^d)$. For $d=1$, the cases when  $\Lambda=\Z$ or
$\Lambda=\N$ are of particular interest. For $1\leq p\leq 2$, a
Fourier transform argument yields that there does not exist an $f\in
L_p(\R)$ such that $ X_p (f,\Z) =L_p(\R)$ \cite{AO}. On the other
hand, for all $\{\lambda_n\}_{n\in\Z}\subset\R\setminus\Z$ such that
$\lim_{n\rightarrow\pm\infty}|\lambda_n-n|=0$, there exists $f\in
L_2(\R)$ such that $X_2 (f,\Z) = L_2(\R)$ \cite{O}. The case
$2<p<\infty$ is completely different, as for all $2<p<\infty$ there
exists $f\in L_p(\R)$ such that $X_p (f,\Z) = L_p(\R)$
 and, moreover,   $T_m f\not\in X_p (f,\Z\setminus \{m\})$ for all
 $m\in\Z$ \cite{AO}.

Suppose that $f\in L_p(\R)$ and that $\{T_\lambda
f:\lambda\in\Lambda\}$ is an unconditional basic sequence in
$L_p(\R)$. What can be said about $X_p(f,\Lambda)$? If $1\le p\le 2$
then $\{T_\lambda f :\lambda\in\Lambda\}$ must be equivalent to the
unit vector basis of $\ell_p$. If $2<p\le 4$, then $X_p (f,\Lambda)$
embeds into $\ell_p$ but $(T_\lambda f)_{\lambda\in\Lambda}$ need
not be equivalent to the unit vector basis of $\ell_p$.  These
results  were shown in  \cite{OSSZ}  and imply in particular that
for all $1\le p\le 4$ (with \cite{CDH} for the case $p=2$), there is
no function $f\in L_p(\mathbb R)$ and set $\Lambda\subset\mathbb R$
so that $(T_\lambda f:\lambda\in\Lambda)$ is an unconditional basis
for $L_p(\mathbb R)$.  For $4<p<\infty$, there exists $f\in L_p
(\R)$ and $\Lambda\subseteq \N$ such that $(T_\lambda
f)_{\lambda\in\Lambda}$ is an unconditional basic sequence and
$L_p(\R)$ embeds isomorphically into $X_p (f,\Lambda)$ \cite{OSSZ}.
In Section~3 we prove that if $(T_\lambda f)_{\lambda\in\Lambda}$ is
an unconditional basic sequence in $L_p(\R)$ with $2<p<\infty$ such
that $X_p(f,\Lambda)$ is complemented in $L_p(\R)$ then $(T_\lambda
f)_{\lambda\in\Lambda}$ must be equivalent to the unit vector basis
of $\ell_p$. In particular, $X_p (f,\Lambda) \ne L_p (\R)$.  Thus,
by filling the gap $(4,\infty)$, we have for all $1\le p<\infty$,
that there is no function $f\in L_p(\mathbb R)$ and set
$\Lambda\subset\mathbb R$ so that $(T_\lambda f:\lambda\in\Lambda)$
is an unconditional basis for $L_p(\mathbb R)$.

A basic sequence $(x_i)$ can uniquely represent every vector in its
closed span in terms of an infinite series.  We now drop the
uniqueness requirement of the representation and consider frames
formed by translating a single function.
 Though there exists $\Lambda\subset\R$ and $f\in L_2(\R)$ such
that $X_2 (f,\Lambda)  = L_2(\R)$, there does not exist
$\Lambda\subset\R$ and $f\in L_2(\R)$ such that $\{T_\lambda
f\}_{\lambda\in\Lambda}$ is a Hilbert frame for $L_2(\R)$\cite{CDH}.
In the case of translation only by natural numbers, for all
$\Lambda\subset\N$ and $f\in L_2(\R)$, the sequence $(T_\lambda
f)_{\lambda\in\Lambda}$ is a Hilbert frame for $X_2(f,\Lambda)$ if
and only if it is a Riesz basis for $X_2(f,\Lambda)$, i.e.,
$(T_\lambda f)$ must be equivalent to the unit vector basis of
$\ell_2$  \cite{CCK}. In Section~\ref{S3} we provide some background
on Schauder frames for Banach spaces and prove that there exists a
function $f\in L_p(\R)$ and sequence $(g^*_n)_{n\in\N}\subset
L_p^*(\R)$ such that $(T_{n} f, g^*_n)_{n\in\N}$ forms an
unconditional Schauder frame for $L_p(\R)$ if (and only if)
$2<p<\infty$.  More generally, we prove that for every $2<p<\infty$,
$d\in\N$ and unbounded sequence $(\lambda_n)_{n\in\N}$ in $\R^d$,
there exists a function $f\in L_p(\R^d)$ and sequence
$(g_n^*)_{n\in\N}$ such that $(T_{\lambda_n}f,g^*_n)_{n\in\N}$ forms
an unconditional Schauder frame for $L_p(\R^d)$.

For $2<p<\infty$, if $L_p$ embeds into $X_p(f,\Lambda)$ and
$X_p(f,\Lambda)$ is complemented in $L_p(\R)$ then
 $(T_\lambda f)_{\lambda\in\Lambda}$ cannot be an unconditional basic
sequence in $L_p(\R)$.  However, it is possible that  $(T_\lambda
f)_{\lambda\in\Lambda}$ can be blocked into an unconditional FDD.
 We prove in Section~\ref{S4}   that for $2<p<\infty$ there exists $f\in L_p
(\R)$ and $\Lambda \subseteq \N$ so that $X_p(f,\Lambda) $ is
isomorphic to $L_p$, $X_p(f,\Lambda) $ is complemented in $L_p$, and
$\{ T_\lambda f\}_{\lambda\in\Lambda}$ can be blocked to form an
unconditional finite dimensional decomposition (unconditional FDD)
for $X_p(f,\Lambda)$.

In Section~\ref{S5}, we study the restriction operator
$T_I:L_p\rightarrow L_p$ given by $x\mapsto x|_I$ where $I\subset\R$
is some bounded interval.  Assuming $(T_{\lambda_i}f)$ is an
unconditional basic sequence, we characterize for what values of
$1\leq p<\infty$ must the map $T_I:X_p (f,(\lambda_i))\rightarrow
L_p$ be compact for all bounded intervals $I\subset R$. We prove as
well other relationships between the restriction operator $T_I:X_p
(f,(\lambda_i))\rightarrow L_p$ and the structure of $X_p
(f,(\lambda_i))$. Lastly, in Section~\ref{S6} we give some open
problems.

\section{Unconditional bases of translates}\label{S2}

\begin{thm}\label{T:1}
Let  $2< p<\infty$ and $f\in L_p(\R)$. If $(T_\lambda f
:\lambda\in\Lambda)$ is an unconditional basis of  $X_p(f,\Lambda)$
and
 $X_p(f,\Lambda)$ is complemented in $L_p(\R)$, then
 $(T_\lambda f)_{\lambda\in\Lambda}$ is equivalent to the unit vector basis of $\ell_p$.
\end{thm}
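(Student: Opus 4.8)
The plan is to reduce the statement to an upper $\ell_p$--estimate for $(T_\lambda f)_{\lambda\in\Lambda}$ and then to extract that estimate from the complementation hypothesis. First come routine reductions. If $\Lambda$ were bounded it would have an accumulation point $\lambda_0$; but any two members of a basic sequence with constant $K$ satisfy $\|x_j-x_k\|\ge\|x_j\|/K$, while $\mu\mapsto T_\mu f$ is norm--continuous, so some subsequence of $(T_\lambda f)$ would be norm--Cauchy, which is absurd. Hence $\Lambda$ has no accumulation point. Since moreover $\|T_{a+s}f-T_{a+t}f\|_p$ depends only on $s-t$ and $\{T_sf:0\le s\le\rho\}$ is norm--compact, that same separation forces a finite bound $\omega_\rho=\omega_\rho(f,\Lambda)$ on $|\Lambda\cap J|$ for every interval $J$ of length $\rho$. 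Splitting $\Lambda$ into its finite bounded part and the parts tending to $+\infty$ and to $-\infty$, which can be treated separately, we may assume $\Lambda=\{\lambda_n:n\in\N\}$ with $\lambda_n\uparrow+\infty$. Finally, $L_p(\R)$ has type $2$ and cotype $p$, so every normalized unconditional basic sequence $(x_i)$ in $L_p(\R)$ obeys
\[
c\Big(\sum|a_i|^p\Big)^{1/p}\le\Big\|\sum a_ix_i\Big\|_p\le C\Big(\sum|a_i|^2\Big)^{1/2},
\]
with $c,C$ depending only on $p$ and the unconditional constant. Thus $(T_\lambda f)_{\lambda\in\Lambda}$ already dominates the unit vector basis of $\ell_p$, and everything reduces to proving the reverse domination $\big\|\sum_\lambda a_\lambda T_\lambda f\big\|_p\le C'\big(\sum_\lambda|a_\lambda|^p\big)^{1/p}$.

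The workhorse is a head--tail decomposition combined with an interval colouring. Given $R>0$ write $f=f\mathbf 1_{[-R,R]}+f\mathbf 1_{\{|x|>R\}}$; the interval graph on $\Lambda$ joining $\lambda,\mu$ when $|\lambda-\mu|\le 2R$ is perfect with clique number $\omega_{2R}$, so $\Lambda$ splits into $\omega_{2R}$ classes, each $2R$--separated, and within a class the ``head'' translates $T_\lambda(f\mathbf 1_{[-R,R]})$ are disjointly supported. Using this together with unconditionality and the automatic lower $\ell_p$--estimate, one sees that the ``head'' operator $T_\lambda f\mapsto T_\lambda(f\mathbf 1_{[-R,R]})$ is bounded on $X_p(f,\Lambda)$, with an $\ell_p$--estimate of constant $\lesssim\omega_{2R}^{1-1/p}\|f\mathbf 1_{[-R,R]}\|_p$, and hence so is the complementary ``tail'' operator $T_\lambda f\mapsto T_\lambda(f\mathbf 1_{\{|x|>R\}})$. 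Here is the crux, and the step I expect to be the main obstacle: iterating this across annuli $R_0<R_1<\dots\uparrow\infty$ yields $\big\|\sum_\lambda a_\lambda T_\lambda f\big\|_p\le\big(\sum_k\omega_{2R_k}^{1-1/p}\|f\mathbf 1_{\{R_{k-1}<|x|\le R_k\}}\|_p\big)\big(\sum_\lambda|a_\lambda|^p\big)^{1/p}$, but the colour counts $\omega_{2R_k}$ can grow faster than any admissible choice of the annular masses can offset, since the $p$--th powers of those masses must sum to $\|f\|_p^p$; and indeed without complementation the theorem is false for $p>4$. The role of a complementing projection $P\colon L_p(\R)\to X_p(f,\Lambda)$ is to convert this divergent bookkeeping into a convergent one --- roughly, by representing an arbitrary $h\in X_p(f,\Lambda)$ as $P$ applied to a sum of head pieces plus a uniformly small tail and exploiting that heads belonging to widely separated scales live on disjoint parts of $\R$, so that one gains an $\ell_p$ (rather than $\ell_1$) summation over the scales, which $\|P\|$ then absorbs. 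A convenient way to package the outcome is the assertion that, for every bounded interval $I$, the restriction operator $T_I\colon X_p(f,\Lambda)\to L_p(\R)$ is compact --- a statement in the spirit of Section~\ref{S5}, whose proof again rests on applying the head/colouring analysis to $P$.

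Finally I would deduce the conclusion from that compactness. As $L_p(\R)$ is reflexive, $X_p(f,\Lambda)$ is reflexive and $(T_\lambda f)_{\lambda\in\Lambda}$ is a shrinking basis, so every normalized block basis $(w_k)$ of it is weakly null; compactness of each $T_I$ then forces $\|w_k\mathbf 1_I\|_p\to0$ for every bounded $I$, i.e. the mass of $w_k$ escapes to infinity. A standard gliding--hump argument now extracts a subsequence of $(w_k)$ that, after a perturbation of arbitrarily small total norm, is disjointly supported and semi--normalized, hence equivalent to the unit vector basis of $\ell_p$ with an absolute constant. Thus every normalized block basis of $(T_\lambda f)_{\lambda\in\Lambda}$ has a subsequence equivalent to the $\ell_p$--basis with an absolute constant, so $(T_\lambda f)_{\lambda\in\Lambda}$ is perfectly homogeneous; by Zippin's theorem it is then equivalent to the unit vector basis of $c_0$ or of some $\ell_q$, and the lower $\ell_p$--estimate pins down $q=p$. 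Therefore $(T_\lambda f)_{\lambda\in\Lambda}$ is equivalent to the unit vector basis of $\ell_p$; in particular $X_p(f,\Lambda)\cong\ell_p\ne L_p(\R)$.
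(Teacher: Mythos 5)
Your preliminary reductions are fine (uniform discreteness of $\Lambda$, the automatic lower $\ell_p$/upper $\ell_2$ estimates, the reduction to an upper $\ell_p$ estimate), but the argument stops exactly where the theorem lives: the head/colouring estimate only yields a constant of order $\omega_{2R}\|f\mathbf{1}_{[-R,R]}\|_p$ per scale, you concede that summing these over annuli diverges, and the passage where the projection $P$ is supposed to ``absorb'' that divergence is described only ``roughly'' --- no inequality is proved there. Worse, the concrete intermediate statement you propose to prove instead, namely that the hypotheses force every restriction $T_I\colon X_p(f,\Lambda)\to L_p(I)$ to be compact, is false, so this packaging cannot work even in principle. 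Proposition~\ref{P:6} exhibits, for every $2<p<\infty$, translates $f_i=\frac1{N_j}x_j+g_i$ ($i\in J_j$) that are equivalent to the unit vector basis of $\ell_p$ and yet $T_{[0,1]}$ is not compact; and there $X_p(f,(-\lambda_i))$ \emph{is} complemented in $L_p(\R)$: with $\tilde g_i=|g_i|^{p-1}\operatorname{sign}(g_i)/\|g_i\|_p^p$ one has $\|\tilde g_i\|_q=1/\|g_i\|_p$ bounded and $\tilde g_i(f_k)=\delta_{ik}$ (the $g_i$ are disjointly supported and avoid $[0,1]$), so $h\mapsto\sum_i\tilde g_i(h)f_i$ is a bounded projection onto the span, since $\bigl(\sum_i|\tilde g_i(h)|^p\bigr)^{1/p}\le\sup_i\|\tilde g_i\|_q\,\|h\|_p$ and $(f_i)$ is equivalent to the $\ell_p$ basis. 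Hence compactness of the $T_I$ is strictly stronger than the conclusion of Theorem~\ref{T:1} and does not follow from its hypotheses, and your final step rests on a false lemma. (As a side remark, even granted compactness, ``every normalized block basis has a uniformly $\ell_p$-equivalent subsequence'' does not by itself give perfect homogeneity, so the appeal to Zippin would also need repair.)

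The usable way complementation enters is on the dual side, which your primal attack never reaches. Extend the biorthogonal functionals $\gb_i$ of $(f_i)$ to $g_i\in L_q(\R)$, $q<2$, by letting them vanish on a complement $Y$; they then form an unconditional basic sequence in $L_q$ biorthogonal to $(f_i)$. Since $g_i(f_i)=1$ and all but a tiny amount of the $L_p$ mass of $f$ sits in a window of fixed length, each $g_i$ keeps $L_q$-mass at least $\vp$ on one of $N+1$ $\delta$-intervals tied to $\lambda_i$, with bounded multiplicity; splitting $(g_i)$ into finitely many subsequences carrying this mass on pairwise disjoint intervals and applying the Johnson--Odell lemma (Proposition~\ref{P:2}, valid precisely because $q\le2$) shows $(g_i)$ is equivalent to the unit vector basis of $\ell_q$, and the direct sum decomposition $L_p=X\oplus Y$ transfers this to $(\gb_i)$, hence $(f_i)$ is equivalent to the unit vector basis of $\ell_p$. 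Note the asymmetry: the $\ell_q$-side lemma has no $\ell_p$-side analogue (Proposition~\ref{P:6} again), which is why obtaining the upper $\ell_p$ estimate directly, as you attempt, requires a genuinely new idea rather than better bookkeeping with the projection.
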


We will need the following result from \cite{JO}.

\begin{prop}\label{P:2}
\cite[Section 3, Lemma 2]{JO}. Let $1\le q\le 2$. Let $(f_i)
\subseteq L_q (\R)$ be seminormalized and unconditional basic.
Assume that for some $\vp>0$ there exists a sequence of disjoint
measurable sets $(B_i)_{i=1}^\infty$ with $\|f_i|_{B_i}\|_q \ge
\vp$, for all $i$. Then $(f_i)_{i=1}^\infty$ is equivalent to the
unit vector basis of $\ell_q$.
\end{prop}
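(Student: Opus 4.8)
The plan is to reduce everything to the standard square-function estimate for unconditional sequences in $L_q$, and then to exploit the hypothesis $q\le 2$ together with the separating sets $(B_i)$ in two complementary ways. The target is to produce constants $0<A\le B<\infty$ with $A(\sum_i|a_i|^q)^{1/q}\le\|\sum_i a_if_i\|_q\le B(\sum_i|a_i|^q)^{1/q}$ for all finitely supported scalars $(a_i)$.

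First I would establish the pointwise square-function equivalence
\[
  \Big\|\sum_i a_if_i\Big\|_q\approx\Big\|\Big(\sum_i|a_i|^2|f_i|^2\Big)^{1/2}\Big\|_q,
\]
with constants depending only on $q$ and the unconditional constant $K$. This rests on two ingredients. Averaging over independent random signs $(\vp_i)$ and using that $K^{-1}\le\|\sum_i\vp_ia_if_i\|_q/\|\sum_i a_if_i\|_q\le K$ for every sign choice gives $\E_\vp\|\sum_i\vp_ia_if_i\|_q^q\approx_K\|\sum_i a_if_i\|_q^q$. Applying Khintchine's inequality to the inner sum pointwise in $x$ and integrating yields $\E_\vp\|\sum_i\vp_ia_if_i\|_q^q\approx_q\int(\sum_i|a_i|^2|f_i(x)|^2)^{q/2}\,dx$, which is exactly the $q$-th power of the square-function norm. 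I expect this to be the conceptual heart of the argument, although it is routine once the Khintchine/unconditionality interplay is set up.

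For the lower estimate I would use the disjointness of the $(B_i)$. Pointwise the square function dominates each individual term, so restricting to the disjoint sets,
\[
  \Big\|\Big(\sum_i|a_i|^2|f_i|^2\Big)^{1/2}\Big\|_q^q
  \ge\sum_j\int_{B_j}|a_j|^q|f_j|^q
  =\sum_j|a_j|^q\,\|f_j|_{B_j}\|_q^q
  \ge\vp^q\sum_j|a_j|^q.
\]
Combined with the square-function equivalence this gives $\|\sum_i a_if_i\|_q\gtrsim(\sum_i|a_i|^q)^{1/q}$, the lower $\ell_q$-bound; note this step uses only the hypothesis $\|f_i|_{B_i}\|_q\ge\vp$ and not the upper seminormalization.

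For the upper estimate I would use that $q\le 2$, so that $r:=q/2\le 1$ and the quasi-norm $\|\cdot\|_r$ satisfies $\|\sum_i g_i\|_r^r\le\sum_i\|g_i\|_r^r$ for nonnegative $g_i$ (from $(a+b)^r\le a^r+b^r$). Taking $g_i=|a_i|^2|f_i|^2$ gives
\[
  \Big\|\Big(\sum_i|a_i|^2|f_i|^2\Big)^{1/2}\Big\|_q^q
  =\Big\|\sum_i|a_i|^2|f_i|^2\Big\|_{q/2}^{q/2}
  \le\sum_i|a_i|^q\,\|f_i\|_q^q
  \le C^q\sum_i|a_i|^q,
\]
where $C=\sup_i\|f_i\|_q<\infty$ by seminormalization. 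The square-function equivalence then yields the matching upper $\ell_q$-bound, completing the proof. The only place where $q\le 2$ is genuinely essential is this last step (the concavity of $t\mapsto t^{q/2}$); the square-function estimate and the lower bound hold for every $q$.
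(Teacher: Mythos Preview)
The paper does not supply its own proof of this proposition; it is quoted verbatim from \cite[Section~3, Lemma~2]{JO} and used as a black box in the proof of Theorem~\ref{T:1}. Your argument is correct and is essentially the classical Johnson--Odell proof: unconditionality plus Khintchine gives the square-function equivalence, the disjoint sets $(B_i)$ yield the lower $\ell_q$-estimate via the pointwise domination $(\sum_i|a_if_i|^2)^{1/2}\ge|a_jf_j|$ on $B_j$, and the concavity of $t\mapsto t^{q/2}$ for $q\le 2$ yields the upper $\ell_q$-estimate. There is nothing to compare against in the present paper, but your write-up matches the standard route and is complete as stated.
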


\begin{proof}[Proof of Theorem \ref{T:1}]
Without loss of generality we can assume that $\|T_{\lambda_i} f\|_p
= \|f\|_p =1$, for $i\in\N$. Put $f_i= T_{\lambda_i} f$ and
$X=X_p(f,\Lambda)$ and let $Y\subset L_p(\R)$ be a complement of $X$
in $L_p(\R)$.
 Denote the biorthogonals  of $(f_i)$ inside $X^*$ by $(\gb_i)$ and let $g_i$,  $i\in\N$, be the extension of $\gb_i$  to an element  of $L^*_p(\R)=L_q(\R)$ with $g_i|_Y\equiv 0$.
Thus $(g_i)$ is an unconditional basic sequence inside $L_q(\R)$
which is biorthogonal to $(f_i)$ and vanishes on $Y$.

Recall that if $\{ T_\lambda f:\lambda\in\Lambda\}$ is a basic
sequence in $L_p(\R)$, for some $f\in L_p (\R)$ and $\Lambda
\subseteq \R$, then $\Lambda$ is uniformly discrete \cite{OSSZ}.
That is, we may choose $\delta>0$ such that
 $$0<\delta<\inf\{|\lambda-\mu|: \lambda,\mu\in\Lambda, \lambda\not=\mu\}.$$
For $j\in\Z$, we define the interval $I_j=[j\delta,(j+1)\delta)$.

\medskip
\noindent{\bf Claim.}  There exist $N\in\N$ and $\vp>0$, so that:
for all $i\in\N$ there is an $l_i\in\Z$  and a $j_i\in\{ l_i,
l_i+1,\ldots ,l_i+N\}$, so that
$$   l_i\not=l_{i'}, \text{ if $i\not=i'$, and }  \big\| g_i|_{I_{j_i}}\big\|_q>\vp.$$
Indeed, choose first $l_0\in \Z$ and $N\in\N$ so that
$$\|  f |_{\R\setminus \bigcup_{j=l_0}^{l_0+N-1}I_j}\|_p ^p
= \int_{-\infty}^{l_0 \delta} \big|f(z)\big|^p dz
+\int_{(l_0+N)\delta}^\infty \big|f(z)\big|^pdz <  \frac1{2^p}
\Big(\sup_{i\in\N} \|g_i\|_q^p\Big)^{-1} $$ Then  for $i\in\N$
choose  $l_i\in\Z$ such that
$$l_0\delta\le (l_i+1)\delta-\lambda_i<(l_0+1)\delta.$$
Note that if $i\not=i'$ it follows that
$|\lambda_i-\lambda_{i'}|>\delta$ and, thus, $l_i\not=l_{i'}$.
Moreover,
\begin{align*}
 \|  f_i |_{\R\setminus \bigcup_{j=l_i}^{l_i+N}I_j}\|_p^p
 & =   \int_{-\infty}^{l_i \delta} \big|f(x-\lambda_i)\big|^p dx
 + \int_{(l_i+N+1)\delta}^\infty \big|f(x-\lambda_i)\big|^pdx\\
 & = \int_{-\infty}^{l_i \delta-\lambda_i} \big|f(z)\big|^p dz
 + \int_{(l_i+N+1)\delta-\lambda_i}^\infty \big|f(z)\big|^pdz\\
 &\le  \int_{-\infty}^{l_0 \delta} \big|f(z)\big|^p dz
 +\int_{(l_0+N)\delta}^\infty \big|f(z)\big|^p dz<\frac1{2^p}
 \Big(\sup_{i\in\N} \|g_i\|_q^p\Big)^{-1}.
\end{align*}
Thus, by H\"older's Theorem and the fact that $\|f\|_p =1$, it
follows that
\begin{align*}
\big\| g_i|_{ \bigcup_{j=l_i}^{l_i+N}I_j}\big\|_q \ge \int_{
\bigcup_{j=l_i}^{l_i+N}I_j } g_i f_i dz = 1- \int_{ \R\setminus
\bigcup_{j=l_i}^{l_i+N}I_j} g_i f_i \ge 1-\big\|g_i\big\|_q \big\|
f |_{ \R\setminus \bigcup_{j=l_i}^{l_i+N}I_j}\big\|_p \ge \frac12\ .
\end{align*}
Letting $\vp=\frac1{2(N+1)}$ we deduce our claim.

Since the $l_i$'s are   distinct, it follows that for each $k\in\Z$
$$\#\{i\in\N: j_i=k\}\le\# \{ i: k\in [l_i,l_i+N]\}= \#\{i: l_i\in[k-N,k]\} \le N+1\ .$$

For each $k\in\Z$ we can  order the (possibly empty) set $\{ i \in\N
: j_i = k \}$ into $i(k,1), i(k,2),\allowbreak \ldots i(k,m_k)$,
with $0\le m_k\le N+1$ (where we let $m_k=0 $  if $\{ i \in\N :
j_i=k\}$  is empty).

For $k\in\Z$ and $s\in\{1,2\ldots m_k\}$  put
$g^{(s)}_k=g_{i(k,s)}$.

For each  $s\le N+1$ it follows that the sequence $(g_k^{(s)}:k\in\Z
\text{ and } m_k\ge s) $, satisfies the condition of Proposition
\ref{P:2},  with $B_k=[k\delta, (k+1)\delta)$, as long it is
infinite and must therefore be equivalent to the unit vector basis
of $\ell_q$. Thus, since
 $\overline{\Span \{g_i\}}_{i\in\N}$
 is the unconditional sum of $[g_k^{(s)}:k\in\Z\text{ and } m_k\ge s]$,
 $s=1,2,\ldots N+1$, it follows that $(g_i)$ must be equivalent to the unit vector basis of  $\ell_q$. Since
 $X$ is complemented in $L_p(\R)$  and $g_i|_Y=0$  (recall that $L_p=X\oplus Y$) and $g_i|_X=\gb_i$ for $i\in\N$ it follows that $(\gb_i)$ is equivalent  to  $(g_i$) and, thus,  also
 equivalent to the unit vector basis of $\ell_q$. But this implies that $(f_i)$ is equivalent to the unit vector basis of $\ell_p$.
\end{proof}

\begin{cor}\label{C:3}
If $f\in L_p (\R)$, $1\le p<\infty$, $(T_\lambda
f)_{\lambda\in\Lambda}$ is an unconditional basis for $X_p
(f,\lambda)$ then $X_p(f,\Lambda) \ne L_p (\R)$.
\end{cor}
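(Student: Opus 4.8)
The plan is to deduce Corollary~\ref{C:3} from Theorem~\ref{T:1} by a short case analysis on the value of $p$. For $2 < p < \infty$, observe that if $X_p(f,\Lambda) = L_p(\R)$, then $X_p(f,\Lambda)$ is trivially complemented in $L_p(\R)$ (it equals the whole space), so Theorem~\ref{T:1} applies and forces $(T_\lambda f)_{\lambda \in \Lambda}$ to be equivalent to the unit vector basis of $\ell_p$. But then $L_p(\R)$ would be isomorphic to $\ell_p$, which is false since $L_p(\R)$ contains an isometric copy of $\ell_2$ (for instance, via a sequence of normalized disjointly supported Gaussians, or via the Rademacher functions) while $\ell_p$ with $p \neq 2$ does not contain $\ell_2$. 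Hence $X_p(f,\Lambda) \neq L_p(\R)$.

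For the range $1 \le p \le 2$, one invokes the result recalled in the introduction (from \cite{OSSZ}, and \cite{CDH} for $p=2$): if $(T_\lambda f)_{\lambda\in\Lambda}$ is an unconditional basic sequence in $L_p(\R)$ with $1 \le p \le 2$, then it is equivalent to the unit vector basis of $\ell_p$. Again this would make $L_p(\R)$ isomorphic to $\ell_p$, which is impossible for $1 \le p < 2$ because $L_p(\R)$ contains $\ell_2$ isomorphically whereas $\ell_p$ does not, and impossible for $p = 2$ because $L_2(\R)$ is non-separably-generated by a single orbit only in a way that—more simply—$\ell_2 \cong L_2(\R)$ actually \emph{holds} isometrically, so here one instead uses the stronger fact from \cite{AO} that already no $f$ with $X_p(f,\Z) = L_p(\R)$ exists for $p \le 2$; combined with uniform discreteness of $\Lambda$ this rules out $X_p(f,\Lambda) = L_p(\R)$ for any admissible $\Lambda$. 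Alternatively and more uniformly, one notes that for all $1 \le p < \infty$ with $p \neq 2$, $L_p(\R) \not\cong \ell_p$, and for $p = 2$ one argues directly that an orthonormal-type basis of translates cannot span $L_2(\R)$ by the Fourier-transform obstruction.

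The cleanest write-up simply stitches these together: in every case, $X_p(f,\Lambda) = L_p(\R)$ together with the hypothesis that $(T_\lambda f)$ is an unconditional \emph{basis} of $X_p(f,\Lambda)$ would yield an isomorphism $L_p(\R) \cong \ell_p$ (using Theorem~\ref{T:1} when $p > 2$ with the automatic complementation, and the cited $\ell_p$-equivalence results when $p \le 2$), and this contradicts the structure of $L_p(\R)$ for every $1 \le p < \infty$. I expect the only genuinely delicate point to be the endpoint $p = 2$, where $\ell_2$ and $L_2(\R)$ \emph{are} isomorphic, so the contradiction must come not from Banach-space isomorphic invariants but from the specific obstruction that a single orbit of translates cannot be a Riesz basis of $L_2(\R)$ — this is exactly the content of \cite{CDH}/\cite{CCK} quoted in the introduction, so it can be cited rather than reproved. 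Everything else is a one-line consequence of Theorem~\ref{T:1} and the observation that the whole space is complemented in itself.
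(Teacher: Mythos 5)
Your argument is correct and matches the paper's intended (implicit) proof: for $2<p<\infty$ one applies Theorem~\ref{T:1} with the trivial observation that $L_p(\R)$ is complemented in itself and then uses $L_p(\R)\not\cong\ell_p$, while for $1\le p\le 2$ the statement is the previously known result recalled in the introduction (\cite{OSSZ}, with \cite{CDH} supplying the $p=2$ case via the nonexistence of Hilbert frames, hence Riesz bases, of translates). One small correction: normalized disjointly supported functions span $\ell_p$ isometrically, not $\ell_2$; your Rademacher/Khintchine justification is the right way to see that $\ell_2$ embeds into $L_p(\R)$ for $p\neq 2$.
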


%%%%%%%%%%%%%%%%
\section{Unconditional Schauder frames of translates}\label{S3}
In Section \ref{S2}, it was shown that there does not exist an
unconditional basis of translates of a single function for $L_p(\R)$
for any value $1\leq p<\infty$.  In contrast to this, we will show
that $L_p(\R)$ has an unconditional Schauder frame of integer
translates of a single function if (and only if) $2<p<\infty$.
Before proving this result, we will develop some basic theory of
Schauder frames.

If $X$ is a separable Banach space, then a sequence
$(x_i,g^*_i)_{i=1}^\infty\subset X\times X^*$ is called a {\em
Schauder frame} for $X$ if
\begin{equation}\label{frameDef}
x=\sum_{i=1}^\infty g^*_i(x) x_i\quad\text{ for all }\ x\in  X.
\end{equation}
A Schauder frame $(x_i,g^*_i)_{i=1}^\infty\subset X\times X^*$ is
called an {\em unconditional Schauder frame} for $X$ if the series
(\ref{frameDef}) converges unconditionally for all $x\in X$. Recall
that a series converges {\em unconditionally} if it converges for
any ordering of the elements of the series.

 Let $X$ be a separable Banach space.
 Assume that a sequence $(x_i,g^*_i)_{i=1}^\infty\subset X\times X^*$ satisfies that the
operator $S:X\rightarrow X$ defined by $S(x)=\sum_{i=1}^\infty
g^*_i(x) x_i$ is well defined (and hence bounded due to the uniform
boundedness principle). $S$ is called the {\em frame operator} for
$(x_i,g^*_i)_{i=1}^\infty$. Note that the sequence
$(x_i,g^*_i)_{i=1}^\infty\subset X\times X^*$ is a Schauder frame if
and only if the frame operator is the identity. We define
$(x_i,g^*_i)_{i=1}^\infty$ to be an {\em approximate Schauder frame}
if the frame operator is bounded, one to one, and onto (hence has
bounded inverse), and we define
  $(x_i,g^*_i)_{i=1}^\infty$ to be an {\em unconditional
approximate Schauder frame} if it is an approximate Schauder frame
and the series $\sum_{i=1}^\infty g^*_i(x) x_i$ converges
unconditionally for all $x\in X$.
A similar notion of a frame was studied by Thomas in the context of $\ell_\infty^n$ \cite{T}.

\begin{lem}\label{L:1}
Let $X$ be a separable Banach space and let
$(x_i,g^*_i)_{i=1}^\infty\subset X\times X^*$ be an approximate
Schauder frame for $X$ with frame operator $S$.
Then $(x_i,(S^{-1})^*g^*_i)_{i=1}^\infty$ is a Schauder frame for $X$.
Furthermore, if $(x_i,g^*_i)_{i=1}^\infty$ is an unconditional approximate
Schauder frame for $X$, then $(x_i,(S^{-1 })^*g^*_i)_{i=1}^\infty$
is an unconditional Schauder frame for $X$.
\end{lem}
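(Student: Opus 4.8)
The plan is to verify directly that the frame operator $S$ intertwines the original frame reconstruction with the one defined by the new functionals, so that applying $S^{-1}$ converts the identity $Sx = \sum g_i^*(x) x_i$ into the reconstruction formula $x = \sum \big((S^{-1})^* g_i^*\big)(x)\, x_i$. First I would recall that since $(x_i, g_i^*)$ is an approximate Schauder frame, $S$ is bounded, bijective, and hence has bounded inverse $S^{-1}$, and its adjoint $S^*: X^* \to X^*$ is also an isomorphism with $(S^*)^{-1} = (S^{-1})^*$. Then for a fixed $x \in X$ I would write, using the definition of the adjoint and the continuity of $S^{-1}$ (so that it commutes with the convergent series),
\begin{equation*}
\sum_{i=1}^\infty \big((S^{-1})^* g_i^*\big)(x)\, x_i = \sum_{i=1}^\infty g_i^*(S^{-1}x)\, x_i = S(S^{-1}x) = x,
\end{equation*}
where the middle equality is just the definition of $S$ applied to the vector $S^{-1}x \in X$. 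This establishes that $(x_i, (S^{-1})^* g_i^*)$ is a Schauder frame for $X$.

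For the unconditional part, I would observe that unconditional convergence of $\sum_i g_i^*(y)\, x_i$ for every $y \in X$ — which is the hypothesis — applied to $y = S^{-1}x$ gives unconditional convergence of $\sum_i g_i^*(S^{-1}x)\, x_i = \sum_i \big((S^{-1})^* g_i^*\big)(x)\, x_i$. Since $x \in X$ was arbitrary and $S^{-1}$ maps $X$ onto $X$, this shows the reconstruction series for the new frame converges unconditionally for every $x \in X$, so $(x_i, (S^{-1})^* g_i^*)$ is an unconditional Schauder frame.

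There is essentially no hard obstacle here; the only point requiring a word of care is the interchange of the bounded linear operator $S^{-1}$ (or, dually, the functional $(S^{-1})^* g_i^*$) with the infinite sum, which is justified by continuity of $S^{-1}$ together with the fact that the partial sums $\sum_{i=1}^n g_i^*(y) x_i$ converge in norm for each $y$. I would also note in passing that $(S^{-1})^* g_i^* \in X^*$ since $(S^{-1})^*$ is a bounded operator on $X^*$, so the new pairs genuinely lie in $X \times X^*$ as required by the definition of a Schauder frame.
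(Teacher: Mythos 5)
Your proof is correct and follows essentially the same route as the paper: write $x = S(S^{-1}x)$, expand $S$ at $S^{-1}x$, and use $g_i^*(S^{-1}x) = ((S^{-1})^*g_i^*)(x)$, with unconditionality transferred by applying the hypothesis at the vector $S^{-1}x$ (the paper phrases this via an arbitrary permutation $\pi$, which is the same observation). No gaps.
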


\begin{proof}
Let $x\in X$.  We have that $S$ and $S^{-1}$ are bounded.
Thus,
$$x=S(S^{-1} x)= \sum_{i=1}^\infty g^*_i(S^{-1}x) x_i= \sum_{i=1}^\infty ((S^{-1})^*g^*_i)(x)
x_i.
$$
Hence, $(x_i,(S^{-1 })^*g^*_i)_{i=1}^\infty\subset X\times X^*$ is a
Schauder frame for $X$.
Assume that $(x_i,g^*_i)_{i=1}^\infty$ is an unconditional approximate Schauder
frame for $X$.  If $x\in X$ and $\pi:\N\rightarrow\N$ is a
permutation, then $S(y)=\sum_{i=1}^\infty g^*_{\pi(i)}(y) x_{\pi(i)}$ for all $y\in X$.
Hence,
$$x=S(S^{-1} x)= \sum_{i=1}^\infty g^*_{\pi(i)} (S^{-1}x) x_{\pi(i)}=
\sum_{i=1}^\infty \left( (S^{-1})^*g^*_{\pi(i)}\right)\!(x)\, x_{\pi(i)}\ .
$$
\end{proof}

In particular, Lemma \ref{L:1} implies that $L_p(\R^d)$ has a
(unconditional) Schauder frame formed by translating a single
function if and only if it has an (unconditional) approximate
Schauder frame formed by translating a single function.  This is
important for us, as we will provide an explicit construction for an
approximate Schauder frame of translates for $L_p(\R^d)$ and then
apply Lemma \ref{L:1} to obtain a Schauder frame of translates for
$L_p(\R^d)$ for any $p>2$. One way to verify that a sequence
$(x_i,g^*_i)_{i=1}^\infty\subset X\times X^*$ with frame operator
$S$ is an approximate Schauder frame is to show that $\|S-Id_X\|<1$.

\begin{thm}\label{TH1}
Let $2<p<\infty$ and $d\in\N$.  If $(\lambda_n)_{n\in\N}$ is an
unbounded sequence in $\R^d$ then there exists a function $f\in
L_p(\R^d)$ and a sequence $(g^*_n)_{n\in\N}\subset L_p^*(\R^d)$ such
that $(T_{\lambda_n} f, g^*_n)_{n\in\N}$ forms an unconditional
Schauder frame for $L_p(\R^d)$.
\end{thm}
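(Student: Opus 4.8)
The plan is to reduce the problem to a single, convenient choice of the unbounded sequence and then build the frame by hand. First I would observe that by passing to a subsequence of $(\lambda_n)$ we only weaken the conclusion: if $(T_{\lambda_{n_k}} f, h^*_k)_k$ is an unconditional Schauder frame for $L_p(\R^d)$, then setting $g^*_{n_k} = h^*_k$ and $g^*_n = 0$ for $n$ not of the form $n_k$ gives an unconditional Schauder frame indexed by all of $\N$ (the zero functionals contribute nothing to the expansion and do not affect unconditional convergence). Hence I may assume $\|\lambda_n\| \to \infty$ and, after a further subsequence, that the translates are ``spread out'' as much as I like — for instance that consecutive $\lambda_n$ are separated by more than the diameter of whatever fixed compact set I will work with, so that the translated pieces $T_{\lambda_n}(\text{\rm bump})$ have pairwise disjoint supports. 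By Lemma~\ref{L:1} it then suffices to produce $f$ and $(g^*_n)$ so that the frame operator $S = \sum_n g^*_n(\,\cdot\,)\,T_{\lambda_n}f$ is a well-defined bounded operator with $\|S - \mathrm{Id}\| < 1$ and so that the defining series converges unconditionally for every $x$.

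The construction itself should go as follows. Fix a Schauder basis (indeed an unconditional one is not available, but a monotone basis suffices for the approximate-frame step; for the \emph{unconditional} conclusion I want to exploit that $L_p$ for $2<p<\infty$ contains a complemented copy of $\ell_p$ and is itself built from $\ell_p$-like pieces). Concretely, I would take a sequence $(e_j, e_j^*)$ that is a ``nice'' unconditional Schauder frame — or basis — for $L_p(\R^d)$ supported on a fixed bounded cube $Q$; for example the Haar system on $Q$ reproduces $L_p(Q)$, and $L_p(\R^d) \cong \big(\sum_m \oplus L_p(Q_m)\big)_{\ell_p}$ where $\R^d = \bigsqcup_m Q_m$ is tiled by unit cubes. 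Now re-index the countably many basis-with-coordinate pairs coming from all the cubes $Q_m$ as a single sequence $(y_i, y_i^*)_{i\in\N}$, which is an unconditional Schauder frame for $L_p(\R^d)$ with each $y_i$ supported in some $Q_{m(i)}$. The idea is to let $f$ be a single fixed function — a rapidly decaying ``universal'' profile on $\R^d$ whose translates $T_{\lambda_n} f$ we will use to synthesize each $y_i$: choose the $\lambda_n$'s (a subsequence of the given ones) to cluster near the centers of the cubes, with many $\lambda_n$ assigned to each cube, and arrange that on $Q_{m(i)}$ a suitable finite linear combination $\sum_{n \in A_i} c_{i,n} T_{\lambda_n} f$ approximates $y_i$ to within $\varepsilon 2^{-i}$ in $L_p$, while the ``tails'' of those translates outside $Q_{m(i)}$ are summably small. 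Then set $g^*_n = \sum_i c_{i,n}\, y_i^*$ (with the index sets $A_i$ disjoint so each $n$ appears for at most one $i$), so that $S x = \sum_n g^*_n(x) T_{\lambda_n} f = \sum_i y_i^*(x)\big(\sum_{n\in A_i} c_{i,n} T_{\lambda_n} f\big) \approx \sum_i y_i^*(x) y_i = x$, with error controlled in operator norm by the $\varepsilon 2^{-i}$ approximations and the $\ell_p$-structure of the decomposition. Taking $\varepsilon$ small makes $\|S-\mathrm{Id}\|<1$.

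For unconditional convergence of $\sum_n g^*_n(x) T_{\lambda_n} f$, the point is that after reordering, the partial sums are controlled by the partial sums of the unconditionally convergent expansion $\sum_i y_i^*(x) y_i$ plus a uniformly summable perturbation: since the index blocks $A_i$ are finite and the $y_i$'s form an unconditional frame, and since the ``overspill'' of each $T_{\lambda_n}f$ onto cubes other than $Q_{m(i)}$ is $\ell_p$-summably small (using $\|\lambda_n\|\to\infty$ and the decay of $f$, plus $2<p<\infty$ so that disjointly-supported pieces add in $\ell_p$), any rearrangement of the double series converges to $Sx$. One then invokes Lemma~\ref{L:1} to replace $(g^*_n)$ by $((S^{-1})^* g^*_n)$ and obtain an honest unconditional Schauder frame.

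The main obstacle I anticipate is the simultaneous synthesis step: making \emph{one} fixed $f$ whose translates can reproduce every $y_i$ on its home cube while keeping all the cross-cube tails both small in norm and, crucially, $\ell_p$-summable so that the frame operator is bounded and close to the identity. This is where $p>2$ enters essentially — for $p\le 2$ the analogous construction fails (consistent with Corollary~\ref{C:3} and the Fourier-transform obstruction) — and one must choose the decay of $f$ and the density/placement of the $\lambda_n$'s near each cube in a coordinated way. Handling $d>1$ adds only bookkeeping (tile $\R^d$ by cubes instead of intervals), not new ideas.
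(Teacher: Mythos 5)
There is a genuine gap, and it sits exactly where you say your ``main obstacle'' is: the synthesis step is not an obstacle to be smoothed over later, it is the entire proof, and the route you sketch for it cannot work for a general unbounded sequence. Your plan is to tile $\R^d$ by cubes $Q_m$, take an unconditional system $(y_i,y_i^*)$ with each $y_i$ living on some $Q_{m(i)}$, and then ``choose the $\lambda_n$'s to cluster near the centers of the cubes, with many $\lambda_n$ assigned to each cube,'' so that translates of a fixed bump synthesize $y_i$ on its home cube. But the $\lambda_n$'s are \emph{given} and only assumed unbounded: for instance $\lambda_n=(n,0,\dots,0)$ visits each cube at most once and misses almost all of them, so no cube receives ``many'' translates and nothing clusters anywhere you want it to. You cannot place the translates; you can only choose $f$. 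Moreover, even if translates did land where you need them, approximating an arbitrary $y_i$ in $L_p(Q_{m(i)})$ by finite combinations of translates of one fixed ``universal profile,'' with errors and overspills that are simultaneously $\ell_p$-summable over all $i$, is itself a hard completeness-type problem (of Atzmon--Olevskii flavour) that your proposal does not address; nothing in the decay of a bump gives it to you.

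The paper's proof inverts your picture, and that inversion is the missing idea. One fixes a normalized unconditional basis $(e_k,e_k^*)$ of $L_p(\R^d)$ with $\mathrm{diam}(\supp e_k)\le 1$, picks integers $N_k$ with $\sum_k N_k^{1-p/2}$ small (this is precisely where $2<p<\infty$ enters), and then, using only the unboundedness of $(\lambda_n)$, selects indices $n_{k,i}$ ($1\le i\le N_k$) so far apart that all the relevant translated supports are pairwise disjoint. The function is built as $f=\sum_k\sum_{i=1}^{N_k} N_k^{-1/2}\,T_{-\lambda_{n_{k,i}}}e_k$: a scaled copy of $e_k$ is planted inside $f$ at the \emph{pre-translated} position $-\lambda_{n_{k,i}}$, so that $T_{\lambda_{n_{k,i}}}f$ returns that copy to where $e_k$ actually lives, plus disjointly supported junk. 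With $g^*_{n_{k,i}}=N_k^{-1/2}e_k^*$ (and $0$ otherwise), the ``diagonal'' contributions sum over $i$ to exactly $e_k^*(h)e_k$, while the cross terms, being disjointly supported, have $L_p$-norm controlled by $\big(\sum_s N_s^{1-p/2}\sum_k N_k^{1-p/2}\big)^{1/p}$, which is made $<\tfrac14$; this yields both the unconditional convergence and $\|S-\mathrm{Id}\|<\tfrac12$, after which Lemma~\ref{L:1} finishes, just as in your framework. So your reduction via Lemma~\ref{L:1}, the zero-functional padding, and the perturbation criterion $\|S-\mathrm{Id}\|<1$ are all fine and agree with the paper; what is absent is the actual mechanism --- embedding the scaled basis vectors into $f$ at positions dictated by the given $\lambda_n$'s, and spreading each $e_k$ over $N_k$ translates with weight $N_k^{-1/2}$ so that reconstruction weights sum to $1$ while the junk is $\ell_p$-small --- and without it the argument does not go through.
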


\begin{proof}
Let $(e_i)_{i=1}^\infty$ be a normalized unconditional Schauder
basis for $L_p(\R^d)$ with biorthogonal functionals
$(e_i^*)_{i=1}^\infty$ such that $e_i\in L_p(\R^d)$ is a function
satisfying $\textrm{diam}(\supp(e_i))\leq 1$ for all $i\in\N$. Let
$C_u$ be the constant of unconditionality of $(e_i)_{i=1}^\infty$.
For each $k\in\N$, choose $N_k\in\N$ such that $(\sum_{k=1}^\infty
N_k^{1-p/2})^{1/p}<\frac{1}{2C_u}$. We now inductively construct
natural numbers $n_{1,1} < n_{1,2} < \cdots < n_{1,N_1} < n_{2,1} <
\cdots < n_{2,N_2} < \cdots$ such that if $(k,i)>(s,t)$, in the
lexicographic order, then
\begin{equation}\label{DisjointSupport00a}
|\lambda_{n_{k,i}}-\lambda_{n_{s,t}}|>1\ ,\text{ and}
\end{equation}
\begin{equation}\label{DisjointSupport0a}
\supp(T_{-\lambda_{n_{k,i}}} e_{k})\cap \supp(T_{-\lambda_{n_{s,t}}}
e_{s})=\emptyset\ ,
\end{equation}
and if $(k,i)>(s,t)$, $(k,i)\ge (k',i')$, $(k,i)\ge (s',t')$, $(s,t)\neq (s',t')$ and $(s',t')\neq (k',i')$ then
\begin{equation}\label{DisjointSupporta}
\supp(T_{\lambda_{n_{s,t}}-\lambda_{n_{k,i}}} e_{k})\cap
\supp(T_{\lambda_{n_{s',t'}}-\lambda_{n_{k',i'}}} e_{k'})=\emptyset\ ,
\end{equation}
and, finally, if $(k,i)>(s,t)$, $(k,i)\ge (k',i')$, $(k,i)>(s',t')$
and $(s',t')\neq (k',i')$ then
\begin{equation}\label{DisjointSupportb}
\supp(T_{\lambda_{n_{k,i}}-\lambda_{n_{s,t}}} e_{s})\cap
\supp(T_{\lambda_{n_{s',t'}}-\lambda_{n_{k',i'}}} e_{k'})=\emptyset.
\end{equation}
As we are choosing $n_{k,i}\in\N$, we can clearly
satisfy~\eqref{DisjointSupport00a} and~\eqref{DisjointSupport0a} by
making $n_{k,i}$ sufficiently large since each of the functions $e_j$
has compact support, and the sequence $(\lambda_n)_{n\in\N}$ is
unbounded. As we have $\textrm{diam}(\supp(e_i))\leq1$ for all
$i\in\N$, we will automatically satisfy~\eqref{DisjointSupporta} when
$(k,i)=(k',i')$. The (finitely many) remaining cases
of~\eqref{DisjointSupporta} and~\eqref{DisjointSupportb} can then be
satisfied by making $n_{k,i}$ larger still using again the assumptions
that the functions $e_j$ have compact support, and the sequence
$(\lambda_n)_{n\in\N}$ is unbounded.

As a result of the above inductive construction, we obtain the
following two conditions. For all
$(k,i)\neq(s,t)$,
\begin{equation}\label{DisjointSupport0}
\supp(T_{-\lambda_{n_{k,i}}} e_{k})\cap \supp(T_{-\lambda_{n_{s,t}}}
e_{s})=\emptyset.
\end{equation}
For all $(s,t),(s',t'),(k,i),(k',i')$, if $(s,t)\neq(s',t')$,
$(s,t)\neq(k,i)$ and $(s',t')\neq (k',i')$, then
\begin{equation}\label{DisjointSupport}
\supp(T_{\lambda_{n_{s,t}}-\lambda_{n_{k,i}}} e_{k})\cap
\supp(T_{\lambda_{n_{s',t'}}-\lambda_{n_{k',i'}}} e_{k'})=\emptyset.
\end{equation}

Set $f:=\sum_{k=1}^\infty\sum_{i=1}^{N_k} N_k^{-1/2}
T_{-\lambda_{n_{k,i}}}e_k$.  Our first step is to show that $f\in
L_p(\R^d)$.
\begin{align*}
\int|f|^p \,\text{d}\mu
& = \int \Big|\sum_{k=1}^\infty\sum_{i=1}^{N_k}
N_k^{-1/2} T_{-\lambda_{n_{k,i}}}e_k \Big|^p\,\text{d}\mu\\
&=\sum_{k=1}^\infty\sum_{i=1}^{N_k}
N_k^{-p/2}\int|e_k|^p\,\text{d}\mu\quad\quad\quad\textrm{ by }(\ref{DisjointSupport0})\\
&=\sum_{k=1}^\infty
N_k^{1-p/2}\quad\quad\quad\quad\textrm{ as }\|e_k\|=1\textrm{ for all } k\in\N\\
&<\frac{1}{2^p C_u^p}.
\end{align*}
Thus we have that $f\in L_p(\R^d)$.  For each $j\in\N$, we define
$g^*_j\in L_p^*(\R^d)$ by
$$g^*_j=\begin{cases} N_k^{-1/2} e^*_k \quad &\mbox{if } j=n_{k,i} \textrm{ for some $k\in\N$ and }1\leq i\leq N_k, \\
0 & \mbox{otherwise.}  \end{cases}
$$

We now show that $\sum_{i=1}^\infty g^*_i(h) T_{\lambda_i} f$
converges unconditionally for all $h\in L_p(\R^d)$.  In particular,
this would imply that the frame operator for $(T_{\lambda_n} f,g_n^*)_{n\in\N}$ would be well defined and bounded.
By Proposition~1.c.1 in \cite{LT}, to prove that a series
$\sum_{i\in\N} x_i$ in a Banach space $X$ converges unconditionally,
it is sufficient to prove that for all $\vp>0$ there exists $N\in\N$
such that for all finite subsets $A\subset\N$ with $\min(A)>N$,
$\|\sum_{i\in A} x_i\|<\vp$. Let $\vp>0$ and let $h\in L_p(\R^d)$
such that $\|h\|=1$. Choose $M\in\N$ such that $\sum_{i=M}^\infty
N^{1-p/2}_i<\vp$ and
 $\|\sum_{i=M}^\infty
e^*_i(h)e_i\|<\vp$.  Let $A\subset \N$ such
that $\min(A)\geq n_{M,1}$.
We now have the following estimate.
\begin{align*}
\Big\|\sum_{i\in A} g^*_i(h) T_{\lambda_i} f\Big\|
&=\Big\|\sum_{\substack{(s,t)\in\N^2\\ n_{s,t}\in A} } N_s^{-1/2}e^*_s(h) \sum_{k=1}^\infty \sum_{i=1}^{N_k} N_k^{-1/2}
T_{\lambda_{n_{s,t}}-\lambda_{n_{k,i}}}e_k \Big\|\\
\noalign{\vskip6pt}
&\leq \Big\|\sum_{\substack{ (s,t)\in\N^2\\
n_{s,t}\in A}} e^*_s(h) N_s^{-1}e_s\Big\|
+ \Big\|\sum_{\substack{(s,t)\in\N^2\\
n_{s,t}\in A}} N_s^{-1/2}e^*_s(h) \sum_{(k,i) \neq (s,t)} N_k^{-1/2}
T_{\lambda_{n_{s,t}}-\lambda_{n_{k,i}}}e_k \Big\|\\
\noalign{\vskip6pt}
& = \Big\|\sum_{\substack{ (s,t)\in\N^2\\  n_{s,t}\in A}}e^*_s(h) N_s^{-1}e_s \Big\|
+ \bigg( \sum_{\substack{(s,t)\in\N^2\\n_{s,t}\in A}} N_s^{-p/2}|e^*_s(h)|^p
\sum_{(k,i)\neq (s,t)} N_k^{-p/2} \bigg)^{1/p}\quad\text{ by }(\ref{DisjointSupport})\\
\noalign{\vskip6pt}
&\leq C_u \Big\|\sum_{s=M}^\infty\sum_{t=1}^{N_s}e^*_s(h) N_s^{-1}e_s\Big\|
+ C_u \bigg(\sum_{s=M}^\infty\sum_{t=1}^{N_s} N_s^{-p/2} \sum_{k=1}^\infty
\sum_{i=1}^{N_k} N_k^{-p/2}\bigg)^{1/p}\ ,\\
&\hskip2.5truein \textrm{ as }\min(A)\geq n_{M,1} \text{ and } |e_s^* (h)| \le C_u \\
\noalign{\vskip6pt}
&= C_u \Big\|\sum_{s=M}^\infty e^*_s(h) e_s\Big\|
+ C_u \bigg(\sum_{s=M}^\infty N_s^{1-p/2}\sum_{k=1}^\infty N_k^{1-p/2}\bigg)^{1/p}   < C_u\vp+C_u\vp^{1/p}\frac{1}{2 C_u}.
\end{align*}
Since $\vp>0$ was arbitrary,
the series $\sum_{i=1}^\infty g^*_i(h) T_i f$
converges unconditionally.

Let $S$ be the frame operator for $(T_{\lambda_n} f, g_n^*)_{n\in\N}$.
To show that $(T_{\lambda_n} f, g_n^*)_{n\in\N}$ forms an approximate unconditional
Schauder frame for $L_p(\R^d)$ it suffices to show  that
$\|S-Id_{L_p(\R^d)}\|<\frac{1}{2}$.
Let $h\in L_p(\R^d)$ such that $\|h\|=1$.
Choose $M\in\N$ such $\|\sum_{i=M+1}^\infty e^*_i(h) e_i\|< \frac{1}{8}$
and $\|\sum_{i=n_{M,N_M}+1}^\infty g_i^*(h)T_{\lambda_i} f\|<\frac{1}{8}$.
Then
    %%%%%%%%%% forced page break to accommodate following long eqn
\begin{align*}
\|h-S(h)\| &= \Big\|\sum_{i=1}^\infty
e^*_i(h)e_i-\sum_{i=1}^\infty g^*_i(h)T_{\lambda_i} f \Big \|\\
& <  \Big\|\sum_{i=1}^M e^*_i(h)e_i-\sum_{i=1}^{n_{M,N_M}}
g^*_i(h)T_{\lambda_i}  f\Big \| + \frac{1}{8}+\frac{1}{8}\\
& =  \Big\|\sum_{i=1}^M
e^*_i(h)e_i-\sum_{s=1}^M\sum_{t=1}^{N_s}
N_s^{-1/2}e^*_s(h)\sum_{k=1}^\infty\sum_{i=1}^{N_k} N_k^{-1/2}
T_{\lambda_{n_{s,t}}-\lambda_{n_{k,i}}}e_k \Big\| + \frac{1}{4}\\
 &\leq \Big\|\sum_{i=1}^M
e^*_i(h)e_i-\sum_{s=1}^M\sum_{t=1}^{N_s}
N_s^{-1}e^*_s(h)e_s\Big\| \\
\noalign{\vskip6pt}
&\qquad \qquad + \Big\|\sum_{s=1}^M\sum_{t=1}^{N_s} N_s^{-1/2}e^*_s(h)
 \sum_{(k,i)\neq (s,t)} N_k^{-1/2}
T_{\lambda_{n_{s,t}}-\lambda_{n_{k,i}}}e_k \Big\| + \frac{1}{4}\\
& =  \Big\|\sum_{s=1}^M\sum_{t=1}^{N_s} N_s^{-1/2}e^*_s(h)
 \sum_{(k,i)\neq (s,t)} N_k^{-1/2}
T_{\lambda_{n_{s,t}}-\lambda_{n_{k,i}}}e_k \Big\| + \frac{1}{4}\\
&= \bigg(\sum_{s=1}^M\sum_{t=1}^{N_s} N_s^{-p/2} |e^*_s(h)|^p
 \sum_{(k,i)\neq (s,t)} N_k^{-p/2}\bigg)^{1/p}
 + \frac{1}{4}\qquad\quad\text{ by }(\ref{DisjointSupport})\\
&\leq C_u \bigg(\sum_{s=1}^\infty \sum_{t=1}^{N_s} N_s^{-p/2}
 \sum_{k=1}^\infty\sum_{i=1}^{N_k} N_k^{-p/2} \bigg)^{1/p} + \frac{1}{4}\\
& = C_u \bigg(\sum_{s=1}^\infty N_s^{1-p/2}
 \sum_{k=1}^\infty N_k^{1-p/2}\bigg)^{1/p} + \frac{1}{4}\\
&< C_u\frac{1}{2C_u}\frac{1}{2C_u}+\frac{1}{4}<\frac{1}{2}.
\end{align*}
Thus $\|S-Id_{L_p(\R^d)}\|<\frac{1}{2}$ and hence $S$ is bounded and
has a bounded inverse.  This gives that $(T_{\lambda_n} f,
g_n^*)_{n\in\N}$ forms an approximate unconditional Schauder frame
for $L_p(\R^d)$, and hence $(T_{\lambda_n} f,
(S^{-1})^*g_n^*)_{n\in\N}$ forms an unconditional Schauder frame for
$L_p(\R^d)$ by Lemma \ref{L:1}.
\end{proof}

We now discuss some consequences of Theorem \ref{TH1}.
Given a Schauder frame $(x_i,f_i)_{i=1}^\infty\subset X\times X^*$, let
$H_n:X\rightarrow X$ be the operator $H_n(x)=\sum_{i\geq n}f_i(x) x_i$.
The frame $(x_i,f_i)_{i=1}^\infty$ is called {\em shrinking}
if $\|x^*\circ H_n\|\rightarrow0$ for all $x^*\in X^*$.
A Schauder frame $(x_i,f_i)_{i=1}^\infty\subset X\times X^*$ for a Banach space
$X$ is shrinking if and only if $(f_i,x_i)_{i=1}^\infty\subset
X^*\times X^{**}$ is a Schauder frame for $X^*$ \cite{CL}.
Furthermore, every unconditional Schauder frame for a reflexive
Banach space is shrinking \cite{CLS},\cite{L}.
Thus   the following corollary of Theorem \ref{TH1} ensues.

\begin{cor}\label{cor}
Let $1<q<2$ and $d\in\N$.  If $(\lambda_n)_{n\in\N}\subset\R^d$ is
unbounded then there exists a function $f^*\in L_q^*(\R^d)$ and
sequence $(g_n)_{n\in\N}\subset L_q(\R^d)$ such that
$(g_n,T_{\lambda_n} f^* )_{n\in\N}$ forms a Schauder frame for
$L_q(\R^d)$.
\end{cor}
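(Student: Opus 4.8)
The plan is to obtain Corollary~\ref{cor} by dualizing Theorem~\ref{TH1}. Fix $1<q<2$ and let $p$ be the conjugate exponent, so that $2<p<\infty$ and $L_p^*(\R^d)=L_q(\R^d)$, $L_q^*(\R^d)=L_p(\R^d)$ (using reflexivity). Apply Theorem~\ref{TH1} with this $p$ and the given unbounded sequence $(\lambda_n)_{n\in\N}\subset\R^d$: there exist $f\in L_p(\R^d)$ and $(g_n^*)_{n\in\N}\subset L_p^*(\R^d)=L_q(\R^d)$ so that $(T_{\lambda_n}f,g_n^*)_{n\in\N}$ is an unconditional Schauder frame for $L_p(\R^d)$. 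Now invoke the two cited facts recorded just before the statement: since $L_p(\R^d)$ is reflexive, every unconditional Schauder frame for it is shrinking \cite{CLS},\cite{L}; and a Schauder frame $(x_i,h_i^*)_{i=1}^\infty$ for $X$ is shrinking if and only if $(h_i^*,x_i)_{i=1}^\infty$ is a Schauder frame for $X^*$ \cite{CL}. Hence $(g_n^*,T_{\lambda_n}f)_{n\in\N}\subset L_p^*(\R^d)\times L_p^{**}(\R^d)$ is a Schauder frame for $L_p^*(\R^d)$.

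It remains only to rename everything in terms of $L_q$. Since $L_p^*(\R^d)=L_q(\R^d)$ and $L_p^{**}(\R^d)=L_p(\R^d)=L_q^*(\R^d)$, the pair $(g_n^*,T_{\lambda_n}f)_{n\in\N}$ lives in $L_q(\R^d)\times L_q^*(\R^d)$ and is a Schauder frame for $L_q(\R^d)$. Thus, setting $g_n:=g_n^*\in L_q(\R^d)$ and $f^*:=f\in L_p(\R^d)=L_q^*(\R^d)$, and noting that the canonical image of $T_{\lambda_n}f$ in $L_q^{**}(\R^d)$ under the identification $L_q^{**}=L_q^*$-dual structure is exactly $T_{\lambda_n}f^*$ (the translation operator commutes with all these canonical identifications, since $T_{\lambda_n}$ is weak$^*$-weak$^*$ continuous and is the adjoint on $L_p$ of $T_{-\lambda_n}$ on $L_q$), we obtain that $(g_n,T_{\lambda_n}f^*)_{n\in\N}$ is a Schauder frame for $L_q(\R^d)$, which is the assertion. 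One can either phrase this bookkeeping explicitly, or simply say: the dual frame consists of the second coordinates $T_{\lambda_n}f$ viewed in $L_q^*(\R^d)$, which form a sequence of translates of the single function $f^*=f$.

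The only point requiring a little care — and the one I would single out as the main obstacle, though it is minor — is the identification of the second-coordinate vectors with genuine translates of a fixed function in $L_q^*(\R^d)$ rather than with elements of the abstract bidual $L_q^{**}(\R^d)$. This is what makes the conclusion a statement about translates at all. It is resolved by reflexivity of $L_q(\R^d)$ together with the fact that translation is a well-defined bounded operator on every $L_r(\R^d)$, $1\le r<\infty$, compatible under the $L_p$–$L_q$ duality pairing: $\langle T_{\lambda}u,v\rangle=\langle u,T_{-\lambda}v\rangle$ for $u\in L_p$, $v\in L_q$. Hence the canonical image in $L_q^{**}(\R^d)=L_q(\R^d)^*{}^*$ of $T_{\lambda_n}f$ is identified, under $L_q^{**}=L_p$ and then $L_p=L_q^*$, precisely with the translate $T_{\lambda_n}f^*$. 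With this observation the corollary follows immediately, so no further computation is needed.
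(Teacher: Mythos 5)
Your proof is correct and follows exactly the route the paper intends: apply Theorem~\ref{TH1} with $p$ conjugate to $q$, use that unconditional Schauder frames of reflexive spaces are shrinking \cite{CLS},\cite{L}, and that shrinking frames dualize \cite{CL}, then identify $L_p^{**}$ with $L_p=L_q^*$ by reflexivity. The careful bookkeeping you add about the translates surviving the canonical identifications is a reasonable elaboration of what the paper leaves implicit.
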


Note that in Corollary \ref{cor}, the dual functionals
$(T_{\lambda_n} f^*)_{n\in\N}$ are translations of a single function
as opposed to the vectors $(g_n)_{n\in\N}$.

In \cite{CDOSZ}, it is proven that every Schauder frame has an
associated basis.  Essentially, that means that Schauder frames can
be considered as projections of bases onto complemented subspaces.
In \cite{BFL}, it is proven that every shrinking Schauder frame for
a reflexive Banach space has a shrinking and boundedly complete
associated basis, and it follows from the proof that if the frame is
unconditional then the basis will be unconditional as well. Thus,
the Schauder frame $(T_{\lambda_n} f, g^*_n)_{n\in\N}$ for
$L_p(\R^d)$ will have an unconditional, shrinking, and boundedly
complete associated basis.

%%%%%%%%%%%%%%%%

\section{Unconditional FDDs of translates}\label{S4}

In Section \ref{S2}, it was shown that for all $f\in L_p(\R)$,
$1\leq p< \infty$, and $\Lambda\subset\R$, if $(T_\lambda
f)_{\lambda\in\Lambda}$ is an unconditional basic sequence and $X_p
(f,\Lambda)$ is complemented in $L_p(\R)$ then $(T_\lambda
f)_{\lambda\in\Lambda}$ is equivalent to the unit vector basis for
$\ell_p$.  Instead of considering when $(T_\lambda
f)_{\lambda\in\Lambda}$ is an unconditional basic sequence, we now
study the cases where $(T_\lambda f)_{\lambda\in\Lambda}$ can be
blocked into an unconditional FDD.  Given a Banach space $X$, recall
that a sequence of finite dimensional spaces
$(F_i)_{i=1}^\infty\subset X$ is called a {\em finite dimensional
decomposition} or {\em  FDD} for $X$ if for every $x\in X$ there
exists for all $i\in\N$ a unique $x_i\in F_i$ such that
$x=\sum_{i=1}^\infty x_i$.  An FDD is called unconditional if the
series $x=\sum_{i=1}^\infty x_i$ converges unconditionally for all
$x\in X$.

\begin{thm}\label{thm:3.1}
Let $2<p<\infty$. There exists $f\in L_p (\R)$ and a subsequence
$(n_i)_{i=1}^\infty$ of $\N$ so that for   $X =  X_p
(f,(-n_i)_{i=1}^\infty)$, \ \ i)~$X$ is isomorphic to $L_p(\R)$, ii)
$X$ is complemented in $L_p(\R)$, and iii)~there exists a partition
of $\N$ into successive intervals $(J_j)_{j=1}^\infty$ so that
setting $F_j = \Span \{T_{- n_i}f\}_{i\in J_j}$,
$(F_j)_{j=1}^\infty$ forms an unconditional FDD for $X$.
\end{thm}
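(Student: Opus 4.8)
The plan is to run the construction from the proof of Theorem~\ref{TH1}, but with the unconditional basis $(e_k)_{k=1}^\infty$ chosen to be a normalized unconditional basis (with constant $C_u$) of the \emph{proper} subspace $L_p[0,1]$ --- say the $L_p$-normalized Haar basis, with each $\supp e_k$ a dyadic subinterval, so $\diam(\supp e_k)\le 1$ --- and then to analyze directly the closed span of the resulting translates. First I would fix $N_k\uparrow\infty$ with $N_k\ge 2$ and $\sum_k N_k^{1-p/2}<\infty$ (possible precisely because $p>2$), and, exactly as in Theorem~\ref{TH1}, inductively pick strictly increasing naturals $n_{k,l}$ $(k\in\N,\ 1\le l\le N_k)$, spread far enough apart that, with $f:=\sum_{k,l}N_k^{-1/2}T_{n_{k,l}}e_k$, the translates $T_{n_{k,l}}e_k$ are disjointly supported (so $f\in L_p(\R)$ with $\|f\|_p^p=\sum_k N_k^{1-p/2}$) and the ``remainders''
$$g_{k,l}:=T_{-n_{k,l}}f-N_k^{-1/2}e_k=\sum_{(k',l')\ne(k,l)}N_{k'}^{-1/2}\,T_{n_{k',l'}-n_{k,l}}e_{k'}$$
are pairwise disjointly supported (this is condition \eqref{DisjointSupport} of Theorem~\ref{TH1}, read off for the present basis). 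Since each $\supp e_{k'}\subseteq[0,1]$ and the $n$'s are distinct integers, $\supp g_{k,l}$ is automatically disjoint from $[0,1]$ up to a null set, and $\|g_{k,l}\|_p^p=\|f\|_p^p-N_k^{-p/2}=:\varepsilon_k^p$, a quantity bounded above and (since $N_k\ge 2$) below, uniformly in $(k,l)$.

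Next I would set $(n_i)_{i=1}^\infty$ to be an enumeration of $\{n_{k,l}\}$, let $X=X_p(f,(-n_i))$, let $(J_j)$ partition $\N$ by the first index $k$, and $F_k=\Span\{T_{-n_{k,l}}f:1\le l\le N_k\}$. Because the clean parts $N_k^{-1/2}e_k$ all lie in $L_p[0,1]$ while the $g_{k,l}$ are pairwise disjoint and supported off $[0,1]$, disjointness yields the exact norm identity
$$\Big\|\sum_{k,l}a_{k,l}\,T_{-n_{k,l}}f\Big\|_p^p=\Big\|\sum_k\Big(N_k^{-1/2}\sum_l a_{k,l}\Big)e_k\Big\|_{L_p[0,1]}^p+\sum_k\varepsilon_k^p\sum_l|a_{k,l}|^p .$$
From this, (iii) is immediate: an element of $F_k$ is a scalar multiple of $e_k$ plus a vector supported in the ``$k$-th slot'' $\bigcup_l\supp g_{k,l}$, the slots are pairwise disjoint, so the $F_k$-decomposition of a vector of $X$ is unique, and the projections $\sum_k x_k\mapsto\sum_{k\in A}x_k$ have norm $\le C_u$ (unconditionality of $(e_k)$ on the clean coordinate, disjointness on the slots); hence $(F_k)$ is an unconditional FDD of $X$. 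For (i), the identity presents $X$, in the coordinates $(a_{k,l})$, as $X_0\oplus X_1$, where $X_1=\{\,(a_{k,l}):\sum_l a_{k,l}=0\ \forall k\,\}\cong(\sum_k\oplus\ell_p^{N_k-1})_{\ell_p}\cong\ell_p$, and on $X_0=\{a_{k,l}\text{ constant in }l\}$ one writes $a_{k,l}=N_k^{-1}\xi_k$ and checks, using the cotype-$p$ lower $\ell_p$ estimate for $(e_k)$ (valid as $p\ge 2$) and $\varepsilon_k^p\,N_k^{1-p/2}\le\|f\|_p^p$, that $(\xi_k)\mapsto\sum_k N_k^{-1/2}\xi_k e_k$ is an isomorphism of $X_0$ onto $L_p[0,1]$. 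Thus $X\cong L_p[0,1]\oplus\ell_p\cong L_p(\R)$.

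For (ii) I would build a bounded projection of $L_p(\R)$ onto $X$ by composition: restriction to $[0,1]\cup\bigcup_{k,l}\supp g_{k,l}$ is norm one; $\overline{\Span}\{g_{k,l}\}$ is $1$-complemented in $L_p\big(\bigcup_{k,l}\supp g_{k,l}\big)$ as the closed span of disjointly supported functions (the $\ell_p$-sum of the rank-one projections $h\mapsto\|g_{k,l}\|_p^{-p}\big(\int h\,|g_{k,l}|^{p-1}\operatorname{sgn}g_{k,l}\big)g_{k,l}$); and $L_p[0,1]\oplus_p\overline{\Span}\{g_{k,l}\}$ projects onto $X$ by sending $(u,v)$, with $u=\sum_k\mu_k e_k$ and $v=\sum_{k,l}\zeta_{k,l}g_{k,l}$, to the element of $X$ with clean part $u$ and $k$-th slot $\sum_l\big[(\zeta_{k,l}-\overline{\zeta_k})+N_k^{-1/2}\mu_k\big]g_{k,l}$, where $\overline{\zeta_k}$ averages $\zeta_{k,l}$ over $l$. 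The constraint $N_k^{-1/2}\sum_l(\cdots)=\mu_k$ then holds, so this lands in $X$ and restricts to the identity there, and it is bounded because the only delicate term, $\sum_{k,l}N_k^{-p/2}|\mu_k|^p=\sum_k N_k^{1-p/2}|\mu_k|^p$, is controlled by $\sum_k N_k^{1-p/2}<\infty$.

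The substantive work --- and the step I expect to be the main obstacle --- is the inductive choice of the $n_{k,l}$ making all the $\supp g_{k,l}$ pairwise disjoint: this is the bookkeeping of \eqref{DisjointSupport00a}--\eqref{DisjointSupportb}, now applied to the $L_p[0,1]$-basis $(e_k)$ and combined with the requirement $f\in L_p(\R)$. The other delicate point is confirming that the rigid ``diagonal'' description of $X$ above is an isomorph of $L_p(\R)$ and not merely of $\ell_p$; this is exactly where the hypothesis $p>2$ is used a second time, via cotype $p$ of $L_p$ and the inequality $N_k^{1-p/2}\le 1$, which keeps $\varepsilon_k^p N_k^{1-p/2}$ bounded while $\|\sum_k N_k^{-1/2}\xi_k e_k\|_p^p\gtrsim\sum_k N_k^{-p/2}|\xi_k|^p$.
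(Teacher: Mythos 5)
Your proposal is correct, and the object you build is essentially the paper's own: the paper takes $f=\sum_j\sum_{i\in J_j}N_j^{-1/2}h_j^i$, where $h_j^i$ is the $j$-th Haar function moved to $[3^i,3^i+1]$, which is exactly your $f=\sum_{k,l}N_k^{-1/2}T_{n_{k,l}}e_k$ with the lacunary choice $n_{k,l}=3^i$ doing in one stroke the inductive ``bookkeeping'' you flag as the main obstacle; in both versions $T_{-n}f=N_k^{-1/2}e_k+g$ with the remainders disjointly supported off $[0,1]$, and the verification of (iii) via disjointness of the slots plus unconditionality of the Haar basis is the same. Where you genuinely diverge is in (i) and (ii). The paper embeds $L_p$ into $X$ by a perturbation argument (the vectors $\bar h_j$, using the extra condition $\sum_j N_j^{\frac1p-\frac12}<\vp$), proves $X$ complemented in $Y=\overline{\Span}\{h_j, g_i\}\cong L_p[0,1]\oplus\ell_p$ by exhibiting functionals $\phi_j$ vanishing on the blocks and uniformly bounded projections $P_j$ (boundedness of $\sum_jP_j$ via the lower $\ell_p$ estimate for $(h_j)$, $p>2$), and only then gets $X\cong L_p$ through \cite{JMST} and Pe{\l}czy\'nski's decomposition method. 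You instead use the exact norm identity to split $X=X_0\oplus X_1$ by the block-averaging map, identify $X_0\cong L_p[0,1]$ directly (again via the lower $\ell_p$ estimate and $N_k^{1-p/2}\le1$, so $p>2$ enters in the same two places as in the paper), and write down the projection of $L_p(\R)$ onto $X$ explicitly; this buys a more self-contained proof with no perturbation step, no appeal to \cite{JMST}, and no decomposition method, at the cost of having to control the coordinate description of $X$ by hand.

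Two details you should make explicit, neither of which is a genuine gap. First, the averaging map $(a_{k,l})\mapsto(\bar a_k)$ must be shown bounded on $X$ (an easy H\"older estimate from the norm identity, since $N_k|\bar a_k|^p\le\sum_l|a_{k,l}|^p$); this is also what makes your projection in (ii) bounded, together with the observation $\sup_k|\mu_k|\lesssim\|u\|$, which justifies your estimate $\sum_kN_k^{1-p/2}|\mu_k|^p\lesssim\|u\|_p^p$. Second, the identification $X_1\cong(\sum_k\oplus\ell_p^{N_k-1})_{\ell_p}$ tacitly needs the mean-zero hyperplanes of $\ell_p^{N_k}$ to be uniformly isomorphic to $\ell_p^{N_k-1}$; it is cleaner to note that $X_1$ is infinite dimensional and complemented (by the same averaging) in $\overline{\Span}\{g_{k,l}\}$, which is isometric to a weighted $\ell_p$, so $X_1\cong\ell_p$ by Pe{\l}czy\'nski's theorem on complemented subspaces of $\ell_p$. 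With these two lines added, your argument is complete.
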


\begin{proof}[Proof of Theorem~\ref{thm:3.1}]
Let $\vp >0$ and choose a subsequence $(N_i)_{i=1}^\infty$ of $\N$ so that
\begin{equation}\label{eq:pf-thm3.1-a}
\sum_{j=1}^\infty N_j^{1-\frac{p}2} < \infty \quad \text{ and }\quad
\sum_{j=1}^\infty N_j^{\frac1p - \frac12} <\vp\ .
\end{equation}
Of course the second condition implies the first but we state both as they will be used.

Let $(h_j^i)_{j=1}^\infty$ be the normalized Haar basis for $L_p [3^i,3^i+1]$ for $i \in \N$.
Partition $\N$ into successive intervals $J_1,J_2,\ldots$ so that $|J_j| = N_j$
for $j\in \N$.

Let
$$f = \sum_{j=1}^\infty \sum_{i\in J_j} \frac1{\sqrt{N_j}} \, h_j^i\quad \text{ and let }\
f_i = T_{-3^i}f\ .$$
The choice of $3^i$ above yields, as in Section~2, that for $i\in J_j$,
\begin{equation}\label{eq:pf-thm3.1-b}
f_i = \frac1{\sqrt{N_j}}\, h_j  + g_i
\end{equation}
where $(h_j)$ is the normalized Haar basis for $L_p[0,1]$ and moreover the functions
$(g_i)$ have disjoint supports in $\R$.
Indeed
$$\supp g_i = \bigcup_{\ell\ne i} [3^\ell - 3^i\, ,\, 3^\ell -3^i +1]\ .$$
$f\in L_p(\R)$ since
$$\|f\|_p^p = \sum_{j=1}^\infty \sum_{i\in J_j}
\Big( \frac1{\sqrt{N_j}}\Big)^p
= \sum_{j=1}^\infty N_j \Big( \frac1{\sqrt{N_j}}\Big)^p
= \sum_{j=1}^\infty N_j^{1-\frac{p}2} < \infty\text{ (by \eqref{eq:pf-thm3.1-a}).}$$

Set
$$\bar h_j = \frac1{\sqrt{N_j}} \sum_{i\in J_j} \frac1{\sqrt{N_j}} f_i
= h_j + \frac1{\sqrt{N_j}} \sum_{i\in J_j} g_i\
\text{  (by \eqref{eq:pf-thm3.1-b}).}$$
Then
\begin{gather*}
\|\bar h_j - h_j\|  =  \Big\| \frac1{\sqrt{N_j}} \sum_{i\in J_j} g_i \Big\|_p
= \frac1{\sqrt{N_j}} \bigg( \sum_{i\in J_j} \|g_i\|_p^p \bigg)^{1/p}
\le N_j^{\frac1p - \frac12} \|f\|_p\ ,\ \text{(since }\ \|g_i\|_p \le \|f\|_p).
\end{gather*}
By \eqref{eq:pf-thm3.1-a}, for $\vp$ sufficiently small, it follows that $(\bar h_j)$ is
equivalent to $(h_j)$ and so $L_p$ embeds into $X$.

Let $E_j = \Span \{f_i :i\in J_j\}$ and $F_j = \Span \{\, \{ g_i :i\in J_j\} \cup \{ h_j\}\, \}$.
Since the $g_i$'s are disjointly supported and $(h_i)$ is unconditional, it follows that
$(F_j)$ is an unconditional FDD for its closed linear span, $Y$.
$E_j$ is a co-dimension one subspace of $F_j$ and thus $(E_j)$ is an
unconditional FDD for its closed linear span, $X$.
$Y$ is isometric to $L_p [0,1] \oplus \ell_p$ and this in turn is isomorphic to $L_p(\R)$.

Since $X$ contains an isomorphic copy of $L_p$, to prove that $X$ is
isomorphic to $L_p$ it suffices to prove that $X$ is complemented in
$Y$. Indeed $Y$ is complemented in $L_p$ and so if $X$ is
complemented in $Y$ then $X$ is a complemented subspace of $L_p$
which contains a complemented copy of $L_p$ (by \cite{JMST}). By
Pe{\l}czy\'nski's decomposition method \cite{LT}, $X$ is isomorphic
to $L_p$.

To accomplish this we will first define certain projections $P_j$ from $F_j$ onto $E_j$
and then prove that $P = \sum_j P_j$ is a projection of $Y$ onto $X$.

Let $z_j = \sum_{i\in J_j} N_j^{-1/p} g_i \in F_j \setminus E_j$.
We will prove that the seminormalized sequence $(z_j)$ satisfies $d(z_j,E_j) \ge c >0$
for all $j$ and some $c$.
$P_j$ will then be the projection of $F_j$ onto $E_j$ that sends $z_j$ to $0$ and
hence the $P_j$'s will be uniformly bounded.

We let $(\tilde h_j)$ be the biorthogonal sequence to $h_j$ in $L_q [0,1]$ given by
$\tilde h_j = |h_j|^{p-1} \text{sign}(h_j)$.
Thus $\|\tilde h_j\|_q = 1$.
Set $\tilde g_j = \frac{|g_j|^{p-1} \text{sign}(g_j)}{\|g_j\|^p_p} \in L_q
[0,1]$.
Note that $\tilde g_j(g_j) =1$ and $\tilde g_j(g_i) =0$ for $i\ne j$.
Furthermore $\|\tilde g_j\|_q =\frac1{\|g_j\|_p}$  and $\supp (\tilde
g_j) = \supp (g_j)$.

Next let
$$\phi_j = N_j^{\frac12-\frac1q} \tilde h_j - N_j^{-1/q} \sum_{i\in J_j} \tilde g_i\ .$$
$F_j$ is isometric to $\ell_p^{N_j+1}$ and
$$\|\phi_j |_{F_j}\|
= \sup \left\{ \frac{\Big|N_j^{\frac12 -\frac1p} a_0 - N_j^{-1/q} \sum\limits_{i\in J_j} a_i \Big|}
{\Big( |a_0|^p + \sum\limits_{i\in J_j} |a_i|^p \|g_i\|_p^p \Big)^{1/p}} \right\}\ ,$$
the ``sup'' is taken over all nonzero $(a_i)_0^{N_j} \in \ell_p^{N_j+1}$
\begin{equation*}
\begin{split}
\|\phi_j |_{F_j}\|
= \sup \left\{
\frac{\Big|N_j^{\frac12-\frac1p} a_0
- \sum\limits_{i\in J_j} N_j^{-1/q} \|\tilde g_i\|_q\, a_i \|g_i\|_p \Big|}
{\Big( |a_0|^p + \sum\limits_{i\in J_j} |a_i|^p \|g_i\|_p^p \Big)^{1/p}} \right\} = \Big\| N_j^{\frac12 - \frac1q} e_0
+ \sum_{i\in J_j} N_j^{-1/q} \|\tilde g_i\|_q\, e_i\Big\|_{\ell_q^{N_j+1}}\,,
\end{split}
\end{equation*}
where $(e_i)_0^{N_j}$ is the unit vector basis for $\ell_q^{N_j+1}$.

Thus, since $\tilde h_j$ and $(\tilde g_i)_{i\in J_j}$ are disjointly supported in $L_q(\R)$
$$\| \phi_j |_{F_j}\|
= \Big\| N_j^{\frac12 -\frac1q} \, \tilde h_j + \sum_{i\in J_j} N_j^{-1/q} \tilde g_i\Big\|_{L_q}
= \|\phi_j\|_{L_q}\ .$$
Now for $i_0 \in J_j$,
\begin{equation*}
\phi_j (f_{i_0})
 = N_j^{\frac12 - \frac1q} \, \tilde h_j \Big( \frac1{\sqrt{N_j}}\, h_j\Big)
- N_j^{-1/q} \sum_{i\in J_j} \tilde g_i (g_{i_0})  = N_j^{-1/q} - N_j^{-1/q} = 0\ .
\end{equation*}

Thus $\text{Ker } \phi_j |_{F_j} = E_j$.
It follows that
$$d(z_j,E_j) \ge \frac{\phi_j (z_j)}{\|\phi_j\|_q} = \frac1{\|\phi_j\|_q} \ge c >0$$
for some $c$ and all $j$, since $(\phi_j)$ is seminormalized in $L_q$.

We define $P_j :F_j\to E_j$ by
\begin{equation}\label{eq:pf-thm3.1-c}
\lambda z_j + \sum_{i\in J_j} b_i \Big( \frac1{\sqrt{N_j}} \, h_j + g_i \Big)
\longmapsto \sum_{i\in J_j} b_i \Big( \frac1{\sqrt{N_j}}\, h_j + g_i\Big)\ ,
\end{equation}
and let $P = \sum_{j=1}^\infty P_j$.
Let $C = \sup_j \|P_j\|$.
Let $x = \sum_{j=1}^\infty x_j$, $x_j = a_j h_j + \sum_{i\in J_j} c_i g_i \in F_j$.
Then by  \eqref{eq:pf-thm3.1-c},
$$P(x) = \sum_{j=1}^\infty P_j (x_j)
= \sum_{j=1}^\infty \Big(a_j h_j +
  \sum_{i\in J_j} d_i g_i\Big) $$
for some sequence $(d_i)$.
Thus to show that $P$ is bounded we need only show that for some $K<\infty$,
$$\Big\| \sum_{j=1}^\infty \sum_{i\in J_j} d_i g_i\Big\|_p \le K\|x\|\ .$$
Now
\begin{equation*}
\begin{split}
\Big\| \sum_{j=1}^\infty \sum_{i\in J_j} d_i g_i\Big\|_p
& = \bigg( \sum_{j=1}^\infty \Big\| \sum_{i\in J_j} d_i g_i\Big\|^p \bigg)^{1/p}\\
\noalign{\vskip6pt}
& \le C\bigg( \sum_{j=1}^\infty \|x_j\|^p\bigg)^{1/p}
= C \bigg( \sum_{j=1}^\infty \bigg( |a_j|^p + \Big\| \sum_{i\in J_j} c_i g_i\Big\|^p\bigg)\,
\bigg)^{1/p}\ .
\end{split}
\end{equation*}
$(h_j)$ admits a lower $\ell_p$ estimate since $p>2$.
Thus for some $\bar K$,
\begin{equation*}
\Big\| \sum_{j=1}^\infty \sum_{i\in J_j} d_i g_i\Big\|_p
 \le C\bigg[ \bar K^p \Big\|\sum_{j=1}^\infty a_j h_j\Big\|_p^p
+ \sum_{j=1}^\infty \Big\| \sum_{i\in J_j} c_i g_i\Big\|^p \bigg]^{1/p}  \le C\, \bar K \|x\|\ .
\end{equation*}
\end{proof}

\section{Compactness of restriction operators}\label{S5}

In the case where $(T_{\lambda_i} f)_{i=1}^\infty$ is an unconditional basic sequence
of translates of some $f\in L_p (\R)$, $1\le p\le 2$, the space $X_p (f,(\lambda_i))$
must be quite thin as the next proposition reveals.

\begin{prop}\label{prop:3.2}
Let $f\in L_p(\R)$, $1\le p\le 2$.
Let $T_{\lambda_i} f \equiv f_i$ be such that $(f_i)$ is unconditional basic.
Let $I\subseteq \R$ be a bounded interval and $X =  X_p (f,(\lambda_i))$.
Then the map $T_I : X \to L_p (I)$, $x\longmapsto x|_I$, is a compact operator.
\end{prop}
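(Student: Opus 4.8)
The plan is to exhibit $T_I$ as a norm-limit of finite-rank operators, which forces it to be compact. We may normalize $\|f\|_p=1$ (if $f=0$ the statement is trivial), so that $(f_i)$ is a \emph{normalized} unconditional basic sequence of translates. Two facts from \cite{OSSZ} enter: (i)~$\Lambda=(\lambda_i)$ is uniformly discrete, say $|\lambda_i-\lambda_j|>\delta$ whenever $i\neq j$, for some $\delta>0$; and (ii)~for $1\le p\le 2$ such a sequence is equivalent to the unit vector basis of $\ell_p$, so that there is a constant $K$ with
\[
\|(a_i)\|_{\ell_p}\le K\Big\|\sum_i a_i f_i\Big\|_p
\]
for all finitely supported scalars $(a_i)$; in particular every $x\in X$ has a unique expansion $x=\sum_i a_i f_i$ with $\|(a_i)\|_{\ell_p}\le K\|x\|_p$.

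The heart of the matter is the estimate that the restricted translates $f_i|_I$ decay in $\ell_q$, where $q$ is the exponent conjugate to $p$ (with $q=\infty$ when $p=1$). Since $(f_i|_I)(x)=f(x-\lambda_i)$ for $x\in I$, a change of variables gives $\|f_i|_I\|_p^p=\int_{\R}|f(z)|^p\,\mathbf 1_{I-\lambda_i}(z)\,dz$. A fixed point $z$ lies in $I-\lambda_i$ exactly when $\lambda_i\in I-z$, an interval of length $|I|$; since $(\lambda_i)$ is $\delta$-separated, at most $C_0:=\lfloor|I|/\delta\rfloor+1$ indices $i$ qualify. Hence $\sum_i\mathbf 1_{I-\lambda_i}\le C_0$ pointwise, and therefore
\[
\sum_i\|f_i|_I\|_p^p=\int_{\R}|f(z)|^p\sum_i\mathbf 1_{I-\lambda_i}(z)\,dz\le C_0\|f\|_p^p=C_0<\infty .
\]
Thus $(\|f_i|_I\|_p)_i\in\ell_p$; since $p\le 2$ forces $q\ge 2\ge p$ it also lies in $\ell_q$, and its terms tend to $0$, so either way $\eta_n:=\big\|(\|f_i|_I\|_p)_{i>n}\big\|_{\ell_q}\to 0$ as $n\to\infty$.

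Now define $T_I^{(n)}\colon X\to L_p(I)$ by $T_I^{(n)}\big(\sum_i a_i f_i\big)=\sum_{i=1}^n a_i\,(f_i|_I)$. Since the coefficient functionals $x\mapsto a_i$ are bounded, $T_I^{(n)}$ is a bounded operator of rank at most $n$. For $x=\sum_i a_i f_i\in X$ with $\|x\|_p\le1$, boundedness of $T_I$, the triangle inequality in $L_p(I)$, H\"older's inequality for sequences, and fact (ii) give
\[
\big\|T_I x-T_I^{(n)}x\big\|_p=\Big\|\sum_{i>n}a_i\,(f_i|_I)\Big\|_p\le\sum_{i>n}|a_i|\,\|f_i|_I\|_p\le\|(a_i)\|_{\ell_p}\,\eta_n\le K\eta_n .
\]
Hence $\|T_I-T_I^{(n)}\|\le K\eta_n\to0$, and $T_I$ is compact, being a norm-limit of finite-rank operators.

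The only genuinely nontrivial inputs are the two cited results from \cite{OSSZ}; granting those, the argument is routine. The one point deserving care is the integrability bound $\sum_i\|f_i|_I\|_p^p<\infty$, which rests on the elementary covering estimate $\sum_i\mathbf 1_{I-\lambda_i}\le C_0$ (valid because $\Lambda$ is uniformly discrete). It is exactly here, together with the inclusion $\ell_p\subseteq\ell_q$, that $p\le 2$ is used: the coefficient sequences live in $\ell_p$ while the restricted translates decay only in $\ell_q$, and the two are compatible under the Hölder pairing precisely when $p\le 2$. For $p>2$ the analogous statement fails, so no argument of this shape can work there.
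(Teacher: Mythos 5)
Your proof is correct, and its skeleton (approximate $T_I$ by the finite-rank operators coming from the basis projections, and kill the tail by pairing coefficients against the restricted norms $\|f_i|_I\|_p$ via H\"older) is the same as the paper's; the difference lies in the two inputs you feed into that H\"older step. First, you prove $\sum_i\|f_i|_I\|_p^p\le C_0\|f\|_p^p$ directly from the uniform discreteness of $(\lambda_i)$ by the covering estimate $\sum_i\mathbf{1}_{I-\lambda_i}\le C_0$, whereas the paper simply cites \cite[Proposition~2.1]{OSSZ} for this summability; your version is more self-contained on that point. Second, and more significantly, you place the coefficients in $\ell_p$ by invoking the theorem of \cite{OSSZ} that for $1\le p\le 2$ an unconditional basic sequence of translates is equivalent to the unit vector basis of $\ell_p$, and then use $\ell_p\subseteq\ell_q$ on the restricted norms. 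The paper does the opposite pairing: it keeps the restricted norms in $\ell_p$ and bounds the coefficients in $\ell_q$ via the lower $\ell_2$ estimate $(\sum|a_i|^2)^{1/2}\le K\|x\|$, which for $1<p\le 2$ follows from unconditionality alone (Khintchine/square-function), together with $\ell_2\subseteq\ell_q$ since $q\ge2$. So the paper's route needs only unconditionality plus the summability lemma, while yours imports the full $\ell_p$-equivalence theorem — a legitimate citation, but a much stronger input, and one worth being cautious about since the paper's stated purpose for Propositions~\ref{prop:3.2}--\ref{prop:3.3} is partly to simplify arguments in \cite{OSSZ}; leaning on the main structural theorem of \cite{OSSZ} makes your proof less useful for that purpose (though there is no formal circularity within this paper). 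A small payoff of your approach is that it treats $p=1$ (with $q=\infty$) inside the same argument, where the paper defers $p=1$ to the proof of Corollary~2.4 in \cite{OSSZ}; note, however, that for $p=1$ the lower $\ell_1$ coefficient bound you use really does require the translates-specific result, since unconditionality alone only gives a lower $\ell_2$ estimate there.
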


\begin{proof}
For $p=1$ this follows by the proof of Corollary~2.4 \cite{OSSZ}.
In fact this holds under the assumption that $(f_i)$ is basic (and even less).

Suppose that $1<p\le 2$ and $\vp >0$.
Since $\sum_{i=1}^\infty \|f_i|_I \|_p^p <\infty$ (see Proposition~2.1, \cite{OSSZ})
there exists $N\in \N$ so that $(\sum_{i=N}^\infty \|f_i|_I \|^p)^{1/p} <\vp$.
Let $x = \sum_{i=N}^\infty a_i f_i$, $\|x\|_p =1$.
Then
$$\|x|_I\| \le \sum_{i=N}^\infty |a_i|\, \|f_i|_I\|
\le \bigg(\sum_{i=N}^\infty |a_i|^q\bigg)^{1/q}
\bigg(\sum_{i=N}^\infty \|f_i|_I \|_p^p \bigg)^{1/p}$$
by H\"older's inequality $(\frac1p +\frac1q =1)$.
Since $q\ge 2$, $(\sum_{i=N}^\infty |a_i|^q)^{1/q} \le (\sum_{i=N}^\infty |a_i|^2)^{1/2}$.
Furthermore, by the unconditionality of $(f_i)$, there exists a constant $K$ so that
$$\bigg(\sum_{i=1}^\infty |a_i|^2 \bigg)^{1/2} \le K\|x\| = K\ .$$
$K$ depends only on $p$, the unconditionality constant of $(f_i)$ and
$\|f\| = \|f_i\|$ for $i\in \N$.
Thus $\|x|_I \| \le K\vp$.
This proves that $T_I$ is a compact operator on $X$.
\end{proof}

We will show in Proposition~\ref{P:6} below that
Proposition~\ref{prop:3.2} fails for $p>2$. However, in the range
$2<p\leq 4$ we have the following result whose proof can be extracted
from the proof of~\cite[Theorem~2.11]{OSSZ}.
\begin{prop}\label{prop:3.2a}
Let $f\in L_p(\R)$, $2<p\le 4$.
Let $T_{\lambda_i} f \equiv f_i$ be such that $(f_i)$ is unconditional basic.
Then there is a basic sequence $(g_i)$ in $L_p(\R)$ equivalent to $(f_i)$ such that with
$Y=\overline{\Span}\{g_i:\,i\in\N\}$ for any bounded interval
$I\subseteq \R$ the map $T_I : Y \to L_p (I)$,
$y\longmapsto y|_I$, is a compact operator.
\end{prop}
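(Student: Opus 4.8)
The plan is to mimic the structure used for the range $1<p\le 2$ in Proposition~\ref{prop:3.2}, but to compensate for the fact that for $p>2$ one no longer has the good H\"older estimate $(\sum|a_i|^q)^{1/q}\le(\sum|a_i|^2)^{1/2}$, since now $q<2$. The key quantitative input remains the same: from~\cite[Proposition~2.1]{OSSZ}, if $(f_i)$ is an unconditional basic sequence of translates in $L_p(\R)$ with $p>2$, then for every bounded interval $I$ one has $\sum_i \|f_i|_I\|_p^p<\infty$; moreover (the point of the range $2<p\le 4$, from the proof of~\cite[Theorem~2.11]{OSSZ}) $X_p(f,(\lambda_i))$ embeds into $\ell_p$, and in fact one can pass to an equivalent sequence $(g_i)$ and a bounded ``evaluation-type'' map witnessing this embedding. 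So the first step is: invoke~\cite[Theorem~2.11]{OSSZ} (or rather its proof) to obtain the sequence $(g_i)$ equivalent to $(f_i)$ with $Y=\overline{\Span}\{g_i\}$ isomorphic to a subspace of $\ell_p$, together with the additional structural information extracted from that proof — namely that the $g_i$'s are, up to small perturbation, disjointly supported off a fixed bounded set, or that the relevant tail sums $\sum_{i\ge N}\|g_i|_I\|_p^p$ are still summable with the same bound (translation does not change $\|f_i|_I\|$ up to the analysis in~\cite{OSSZ}, so the summability survives the equivalence).

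The second step is the actual compactness estimate on the tail. Fix a bounded interval $I$ and $\vp>0$, and choose $N$ with $\big(\sum_{i\ge N}\|g_i|_I\|_p^p\big)^{1/p}<\vp$. For $y=\sum_{i\ge N}a_i g_i$ with $\|y\|\le 1$, write
\[
\|y|_I\|_p \le \sum_{i\ge N} |a_i|\,\|g_i|_I\|_p \le \Big(\sum_{i\ge N}|a_i|^p\Big)^{1/p}\Big(\sum_{i\ge N}\|g_i|_I\|_p^{p'}\Big)^{1/p'},
\]
where $\frac1p+\frac1{p'}=1$; since $p>2$ we have $p'<2<p$, so $\|g_i|_I\|_p^{p'}\ge\|g_i|_I\|_p^{p}$ is the wrong direction, and the second factor must instead be controlled by noting that $\|g_i|_I\|_p\le\|g_i\|_p$ is uniformly bounded, hence $\sum_{i\ge N}\|g_i|_I\|_p^{p'}\le (\sup_i\|g_i\|_p)^{p'-p}\sum_{i\ge N}\|g_i|_I\|_p^{p}<(\sup_i\|g_i\|_p)^{p'-p}\vp^p$, which still tends to $0$. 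For the coefficient factor, use that $Y\hookrightarrow\ell_p$: the isomorphism gives a constant $K$ with $\big(\sum_i|a_i|^p\big)^{1/p}\le K\|y\|=K$. Combining, $\|y|_I\|_p\le K\,(\sup_i\|g_i\|_p)^{(p'-p)/p'}\vp$, which is $O(\vp)$ uniformly over the unit ball of $\overline{\Span}\{g_i:i\ge N\}$. Since $T_I$ restricted to the finite-dimensional $\Span\{g_i:i<N\}$ is automatically compact, $T_I:Y\to L_p(I)$ is approximable in norm by finite-rank operators, hence compact.

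The main obstacle, and the reason the statement is phrased with ``a basic sequence $(g_i)$ equivalent to $(f_i)$'' rather than with $(f_i)$ itself, is precisely that $(f_i)$ need \emph{not} satisfy an upper $\ell_p$ estimate — for $p>4$ one knows $L_p$ can embed into $X_p(f,\Lambda)$ by~\cite{OSSZ}, which rules out $\ell_p$-type control on the coefficients, and even for $2<p\le 4$ a bare translate sequence need not be $\ell_p$-bounded in coefficients. What~\cite[Theorem~2.11]{OSSZ} buys in the range $2<p\le 4$ is exactly an \emph{equivalent} renorming/reordering of the sequence for which $X$ does sit inside $\ell_p$, and that embedding is what powers the coefficient estimate $\sum|a_i|^p\le K^p\|y\|^p$. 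So the real content of this proposition is bookkeeping: extracting from the proof of~\cite[Theorem~2.11]{OSSZ} the fact that the passage to $(g_i)$ can be done without destroying the local summability $\sum_i\|g_i|_I\|_p^p<\infty$, and then the compactness is a two-line H\"older-plus-tail argument as above. I would therefore spend the bulk of the write-up pointing precisely into~\cite{OSSZ} to justify that both properties — the $\ell_p$ embedding and the restricted-norm summability — hold \emph{simultaneously} for $(g_i)$, and keep the analytic estimate itself short.
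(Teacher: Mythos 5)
Your proposal has a genuine gap at the crucial tail estimate. After the triangle inequality $\|y|_I\|_p\le\sum_{i\ge N}|a_i|\,\|g_i|_I\|_p$ you apply H\"older with exponents $(p,p')$, $p'=q<2$, and then try to salvage the second factor by claiming $\sum_{i\ge N}\|g_i|_I\|_p^{p'}\le(\sup_i\|g_i\|_p)^{p'-p}\sum_{i\ge N}\|g_i|_I\|_p^{p}$. That inequality is backwards: since $p'-p<0$ and $\|g_i|_I\|_p\le\sup_j\|g_j\|_p$, one gets $\|g_i|_I\|_p^{p'-p}\ge(\sup_j\|g_j\|_p)^{p'-p}$, not $\le$. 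There is no fix within this scheme, because $\sum_i t_i^p<\infty$ with $p'<p$ does not imply $\sum_i t_i^{p'}<\infty$ (take $t_i\sim i^{-1/p}(\log i)^{-2/p}$); the tail $\sum_{i\ge N}\|g_i|_I\|_p^{p'}$ can be infinite even though $\sum_i\|g_i|_I\|_p^p<\infty$. So the crude triangle-plus-H\"older bound that works for $1<p\le2$ in Proposition~\ref{prop:3.2} genuinely breaks for $p>2$, and your analytic estimate never uses $p\le4$ where it must: the statement cannot hold for $p>4$ (if $L_p$ embeds into $X_p(f,\Lambda)$, Proposition~\ref{prop:3.3} would force $Y\simeq X$ into $\ell_p$), so any correct argument has to consume the hypothesis $p\le 4$ in the estimate itself.

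The paper's route repairs exactly this point. The sequence $(g_i)$ is not taken from the $\ell_p$-embedding of \cite{OSSZ}; it is built by approximating each $f_i$ by a simple dyadic function so that $(g_i)$ is a block basis of the translated Haar system $(h_i^j)$ with $\sum_i\|\,|f_i|-|g_i|\,\|_p<\infty$, and Schechtman's perturbation result gives the equivalence with $(f_i)$. The payoff of this particular choice is that the \emph{restrictions} $(g_i|_I)$ are again a block basis of an unconditional basis, hence unconditional with uniform constant; this licenses the upper $\ell_2$ estimate with the local norms, $\|\sum_{i\ge N}a_ig_i|_I\|_p\le C(\sum|a_i|^2\|g_i|_I\|_p^2)^{1/2}$, which is strictly stronger than the triangle inequality you used. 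One then applies H\"older with exponents $\frac{p}{2}$ and $\frac{p}{p-2}$ to split off $\|(a_i)\|_{\ell_p}$ (controlled by the lower $\ell_p$ estimate for unconditional sequences in $L_p$, $p>2$) times $(\sum_{i\ge N}\|g_i|_I\|_p^{2p/(p-2)})^{(p-2)/(2p)}$, and it is the inequality $\frac{2p}{p-2}\ge p$ --- equivalent to $p\le 4$ --- that lets this last factor be dominated by $(\sum_{i\ge N}\|g_i|_I\|_p^{p})^{1/p}<\vp^{1/p}$. In short: your coefficient bound is fine, but the other H\"older factor requires the upper $\ell_2$ estimate for the restricted sequence (this is the real reason for passing to $(g_i)$, not an $\ell_p$-embedding of $Y$), and the exponent bookkeeping is where $2<p\le4$ enters.
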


\begin{proof}
Let $(h_j)$ be the normalized Haar basis for $L_p [0,1]$. For $i\in\Z$
and $j\in \N$ let $h_i^j$ be $h_j$ translated to $[i,i+1]$. Thus
$(h_i^j)$ is a normalized unconditional basis of $L_p(\R)$.

By approximating each $f_i$ by a simple dyadic function we find a
seminormalized block basis $(g_i)$  of $(h_i^j)$ such that
\begin{equation}
\label{eq:perturb}
\sum_{i=1}^\infty \big\| \, |f_i| - |g_i |\,\big\|_p < \infty\ .
\end{equation}
By a very useful observation of Schechtman~\cite{S} it follows that $(f_i)$ is
equivalent to $(g_i)$.

Set $Y=\overline{\Span}\{g_i:\,i\in\N\}$ and let $I$ be a bounded
interval. To show that $T_I : Y\to L_p(I)$ is compact we can assume
that $I=[-M,M]$ for some $M\in \N$. It follows from~\eqref{eq:perturb}
and~\cite[Proposition~2.1]{OSSZ} that $\sum_{i=1}^\infty \|g_i|_I \|_p^p
<\infty$. Fix $\vp >0$ and choose $N$ with
\begin{equation}
\label{eq:small-tail}
\sum_{i=N}^\infty \|g_i|_I \|_p^p < \vp\ .
\end{equation}
We note that $(g_i|_I)$ is a block basis of $(h_i^j)_{(j\in\N,\ -M\le
  i< M)}$ (after omitting zero vectors), and thus it is
unconditional basic. Let $y=\sum_{i=N}^\infty a_ig_i\in Y$. Recalling that
seminormalized unconditional basic sequences in $L_p(\R)$ satisfy
lower $\ell_p$ and upper $\ell_2$ estimates, we obtain the following
inequalities with some constant $C$ (dependent only on $p$ and the
norm of $f$).
\begin{eqnarray*}
  \| y|_I \|_p &=& \Big\| \sum_{i=N}^\infty a_i g_i|_I \|_p \le C
  \bigg( \sum_{i=N}^\infty |a_i|^2 \|g_i|_I \|_p^2\bigg)^{1/2} \\
  & \le & C \|(a_i)_{i=N}^\infty \|_{\ell_p}
\bigg( \sum_{i=N}^\infty \|g_i |_I \|_p^{\frac{2p}{p-2}}
\bigg)^{\frac{p-2}{2p}} \hfill \text{(using H\"older's inequality with
  $\textstyle \frac{p}2$
  and $\textstyle \frac{p}{p-2}$)} \\
&\le& C^2 \| y \|_p
\bigg(\sum_{i=N}^\infty \|g_i |_I \|_p^p\bigg)^{1/p}\leq C^2\vp^{1/p}
\| y \|_p \qquad \text{(using
  $\frac{2p}{p-2}\ge p$)}\ .
\end{eqnarray*}
This completes the proof.
\end{proof}

It is worth noting that  when the operators $T_I$ on some subspace
$X\subset L_p(\R)$  are  compact for all bounded  intervals $I$
then $X$ must embed into $\ell_p$ in a natural way as  the next
proposition reveals.
This observation and Proposition~\ref{prop:3.2} and~\ref{prop:3.2a}
above simplify some arguments in~\cite{OSSZ}.

If $P$ is a partition of $\R$ into bounded intervals $(I_j)$ we let $\E_P$ denote the {\em  conditional expectation
 operator on} $L_p(\R)$ given by
 $$\E_P(f)=\sum_{k=1}^\infty \int_{I_k} f(\xi)d\xi \frac{\chi_{I_k}}{m(I_k)}.$$

\begin{prop}\label{prop:3.3} Let $X$ be a subspace of $L_p(\R)$, $1\le p<\infty$. If for all
bounded intervals $I\subset \R$ the operator
$$T_I: X\to L_p(I), x\mapsto x|_I$$
is compact, then for all $\vp>0$ there exist a partition  $P$ of $\R$ into bounded intervals so that
for all $x\in S_X$, $\|x-\E_P(x)\|<\vp$.
Thus $X$ embeds into $\ell_p$.
\end{prop}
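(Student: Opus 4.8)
The plan is to build the partition $P$ by a compactness-plus-exhaustion argument, controlling the "tail" of functions in $S_X$ on a large interval $[-M,M]$ via compactness of $T_{[-M,M]}$, and controlling the part of the norm living outside $[-M,M]$ by the fact that functions in $L_p$ have small norm off a large enough set — but this last point needs uniformity over $S_X$, which is exactly where compactness enters again. More precisely, first I would observe that since $T_{[-M,M]}$ is compact for every $M$, the set $\{x|_{[-M,M]} : x\in S_X\}$ is relatively compact in $L_p([-M,M])$; in particular it is \emph{uniformly $p$-integrable} and \emph{equi-small at fine scales}, meaning that for any $\eta>0$ there is a partition of $[-M,M]$ into finitely many intervals $I_1,\dots,I_{n_M}$ such that $\|x|_{[-M,M]} - \E_{\{I_1,\dots,I_{n_M}\}}(x|_{[-M,M]})\|_p < \eta$ for all $x\in S_X$ simultaneously. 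This is the standard fact that a norm-compact subset of $L_p$ can be uniformly approximated by its conditional expectations onto a sufficiently fine partition; it follows by covering the compact set by finitely many $\eta/3$-balls and applying the single-function statement to each center.

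**Next** I would handle the tail off $[-M,M]$. The key claim is that $\sup_{x\in S_X}\|x|_{\R\setminus[-M,M]}\|_p \to 0$ as $M\to\infty$. Suppose not: then there are $\delta>0$, $M_k\to\infty$, and $x_k\in S_X$ with $\|x_k|_{\R\setminus[-M_k,M_k]}\|_p \ge \delta$, hence $\|x_k|_{[-M_k,M_k]}\|_p^p \le 1-\delta^p$. I want to derive a contradiction; the cleanest route is to fix a single large interval $I=[-M_1,M_1]$ and note that compactness of $T_I$ forces $\{x_k|_I\}$ to have a convergent subsequence, but that alone does not immediately contradict the tail not going to zero. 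A better approach: pass to the partition side directly. Choose a nested exhaustion $[-1,1]\subset[-2,2]\subset\cdots$ and set $\vp_j = \vp/2^{j+1}$. Using the uniform-approximation fact above on each $[-j,j]$ with tolerance $\vp_j$, pick a finite partition $P_j$ of the annulus $[-j,j]\setminus[-(j-1),j-1]$ (a union of at most two intervals, each of which we subdivide) fine enough that $\E_{P_j}$ approximates, on that annulus, every $x\in S_X$ to within $\vp_j$; then let $P = \bigcup_j P_j$, a partition of $\R$ into bounded intervals. For this to give $\|x-\E_P(x)\|_p<\vp$ I still need $\sum_j \|x|_{[-j,j]\setminus[-(j-1),j-1]}\|_p^p$ to be controlled uniformly, i.e. the tails to shrink uniformly — so the uniform-tail claim is genuinely needed.

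**The hard part will be** precisely this uniform-tail estimate, $\lim_{M\to\infty}\sup_{x\in S_X}\|x\chi_{\R\setminus[-M,M]}\|_p = 0$. Here is how I would prove it using compactness. Fix $\eta>0$. For each $M$, compactness of $T_{[-M,M]}$ gives a finite $\eta$-net $y_1^M,\dots,y_{r_M}^M$ for $\{x|_{[-M,M]}:x\in S_X\}$ in $L_p([-M,M])$. The point is to run this with a \emph{fixed} large $M_0$: for $x\in S_X$, $x|_{[-M_0,M_0]}$ is within $\eta$ of some $y_i^{M_0}$, and each $y_i^{M_0}$, being a single $L_p$ function, has $\|y_i^{M_0}\chi_{\R\setminus[-M,M]}\|_p<\eta$ for $M\ge M_1(i)$; but $y_i^{M_0}$ lives on $[-M_0,M_0]$ so this is automatic for $M\ge M_0$ — which says nothing about $x$ outside $[-M_0,M_0]$. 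The resolution is that we do not need $x$ to be small outside $[-M_0,M_0]$ in an absolute sense; we only need the \emph{conditional expectation approximation} to be good, and for that it suffices to apply the partition argument on $[-M,M]$ for a single $M$ large enough that $\|x\chi_{\R\setminus[-M,M]}\|_p$ is uniformly small — so we really do need the uniform tail, and the above shows a net on a fixed window is insufficient. The correct argument: were the uniform tail to fail, we could, by a diagonal/gliding-hump extraction, produce $x_k\in S_X$ and disjoint intervals $A_k$ marching to $+\infty$ with $\|x_k\chi_{A_k}\|_p\ge\delta$; but then for the fixed interval $I=A_1$, infinitely many $x_k$ have $x_k|_I$ bounded below in norm on a \emph{different} part, contradicting nothing directly — so instead one argues that $\{x_k\}$, being in the unit ball of a subspace $X$ of $L_p$, and $T_I$ compact for every $I$, means every subsequence has a further subsequence converging weakly in $L_p$ to some $x_\infty\in X$ with $x_\infty|_I = \lim x_k|_I$ in norm for each fixed $I$; passing to the limit, $x_\infty\chi_{A_k}$ makes sense only if... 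Here I will instead invoke that compactness of all $T_I$ implies $X$ contains no subspace isomorphic to $\ell_p$ spanned by a sequence "escaping to infinity," which rules out the gliding hump. I expect the cleanest writeup fixes $\vp$, picks $M$ so large that an $\vp^p 2^{-p}$-net argument on $[-M,M]$ plus the observation that the complement contributes at most $\vp^p 2^{-p}$ uniformly (proved by contradiction via the gliding-hump/compactness dichotomy just sketched) pins down a single fine partition of $[-M,M]$, extended arbitrarily outside, doing the job; the embedding of $X$ into $\ell_p$ then follows since $x\mapsto (\E_P(x)\text{ on }I_k)_k$ maps $X$ isomorphically into $(\sum_k L_p(I_k))_{\ell_p} \cong \ell_p$, using that on each fixed interval the map into the finite-dimensional $\E_{P}$-range is finite rank, while the $\ell_p$-sum structure is isometric.
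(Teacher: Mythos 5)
There is a genuine gap, and it is exactly where you located the ``hard part.'' The uniform tail estimate $\lim_{M\to\infty}\sup_{x\in S_X}\|x\chi_{\R\setminus[-M,M]}\|_p=0$ that most of your proposal is devoted to is not only unprovable from the hypotheses --- it is false. Take $X=\overline{\Span}\{\chi_{[n,n+1)}:n\in\N\}$, an isometric copy of $\ell_p$. For every bounded interval $I$ the operator $T_I$ restricted to $X$ is finite rank, hence compact, yet $\sup_{x\in S_X}\|x\chi_{\R\setminus[-M,M]}\|_p=1$ for every $M$ (take $x=\chi_{[n,n+1)}$ with $n>M$). The same example refutes your fallback assertion that compactness of all $T_I$ rules out an $\ell_p$-sequence ``escaping to infinity,'' and it kills your final plan of taking one fine partition of $[-M,M]$ ``extended arbitrarily outside'': on the outside intervals the error $\|x|_J-\E_P(x|_J)\|$ can only be bounded by $2\|x\chi_{\R\setminus[-M,M]}\|_p$, which does not become small uniformly over $S_X$.

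The frustrating thing is that your second paragraph already contains the correct proof (and it is the paper's proof), which you then abandoned for the wrong reason. Partition $\R$ into the unit intervals $[n,n+1)$ (or your annuli $A_j$); on each one, compactness of the restriction operator makes $\{x|_{A_j}:x\in S_X\}$ relatively compact, and since conditional expectations along refining dyadic partitions converge strongly to the identity, they converge uniformly on a compact set; so choose a finite partition $P_j$ of $A_j$ with $\|x|_{A_j}-\E_{P_j}(x|_{A_j})\|_p\le\vp_j$ \emph{for all} $x\in S_X$, where $\sum_j\vp_j<\vp$. Because the $A_j$ are disjoint, the errors add in the $\ell_p$ sense: $\|x-\E_P(x)\|_p=\bigl(\sum_j\|x|_{A_j}-\E_{P_j}(x|_{A_j})\|_p^p\bigr)^{1/p}\le\bigl(\sum_j\vp_j^p\bigr)^{1/p}\le\sum_j\vp_j<\vp$. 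No control of $\sum_j\|x|_{A_j}\|_p^p$ beyond the trivial bound $\|x\|_p^p$ is needed: the per-piece error is already at most $\vp_j$ no matter how much of the mass of $x$ sits on $A_j$, because the approximation is local and uniform over $S_X$ on every piece; the tail is not discarded, it is approximated in place. Your concluding observation that $\E_P|_X$ is then an isomorphic embedding of $X$ into (a weighted) $\ell_p$ is fine.
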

\begin{proof}
For $n\in\N$ let $Q_n$ be the set of dyadic intervals of length $2^{-n}$ in $[0,1)$, i.e.
$$Q_n=\big\{ [0,2^{-n}),[2^{-n}, 2^{1-n}),\ldots [1-2^{-n}, 1)\big\}. $$
 Then $\E_{Q_n}$ converges pointwise
to the identity on $L_p[0,1]$  and therefore
 there exists for every relatively compact set $K\subset L_p[0,1)$  and every $\delta>0$
 a large enough $k\in\N$ so that  for all $x\in K$, $\|x-\E_{Q_k}(x)\|<\vp$.
Choose a sequence $(\vp_n)\subset (0,1)$, with $\sum \vp_n<\vp$ and for
each $n$   choose a  dyadic  partition $P_n$ of the interval $[n,n+1)$ so that
for all $x\in S_X$, $\| x|_{[n,n+1)} -\E_{P_n}(x|_{[n,n+1)})\|\le \vp_n$.

By taking  $P$ to be the union of all $P_n$ we deduce our claim.
\end{proof}

Proposition~\ref{prop:3.2} fails in the case $2<p\le 4$, and of course for $p>4$ as well,
as shown by the next proposition.

\begin{prop}\label{P:6}
Let $2<p<\infty$.
There exists $f\in L_p(\R)$ and $(\lambda_i)_{i=1}^\infty \subseteq \N$ so that for
$f_i = T_{-\lambda_i}f$, $(f_i)_{i=1}^\infty$ is equivalent to the unit vector basis of $\ell_p$.
Moreover, letting $I= [0,1]$ and $T_I :X_p (f,(-\lambda_i)) \to L_p(I)$,
$x\mapsto x|_I$, $T_I$ is not a compact operator.
\end{prop}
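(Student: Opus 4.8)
The plan is to pack a summable family of scaled indicator functions into far-separated unit intervals so that, after translating back, the restrictions to $I=[0,1]$ are all ``parallel'' within a block structure while the tails are pairwise disjointly supported: the first feature will make $T_I$ non-compact, the second will force $(f_i)$ to be $\ell_p$-equivalent. Concretely, set $\lambda_i=3^i$ and fix a partition of $\N$ into successive finite sets $B_1<B_2<\cdots$ with $|B_j|=N_j:=2^j$. Choose pairwise disjoint $A_j\subseteq[0,1)$ with $|A_j|=2^{-j}$, put $v_j=|A_j|^{-1/p}\chi_{A_j}$ (so $\|v_j\|_p=1$ and the $v_j$ are disjointly supported), let $\mu_j:=N_j^{\frac1p-1}$, and for $i\in B_j$ define $u_i:=\mu_j v_j$. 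Then set
\[
f:=\sum_{i=1}^\infty T_{3^i}u_i .
\]
Since the sets $3^i+A_{j(i)}$ lie in pairwise disjoint unit intervals, $\|f\|_p^p=\sum_i\|u_i\|_p^p=\sum_j N_j\mu_j^p=\sum_j N_j^{2-p}$, which is finite precisely because $p>2$; so $f\in L_p(\R)$. Writing $f_i=T_{-3^i}f=u_i+g_i$ with $g_i=\sum_{\ell\ne i}T_{3^\ell-3^i}u_\ell$, uniqueness of base-$3$ expansions gives, exactly as in Section~\ref{S2}, that $3^\ell-3^i\ne 3^{\ell'}-3^{i'}$ whenever $i\ne i'$, $\ell\ne i$ and $\ell'\ne i'$, so the $g_i$ are pairwise disjointly supported; each $g_i$ also vanishes on $[0,1)$, whence $f_i|_{[0,1]}=u_i$ and $\|g_i\|_p^p=\|f\|_p^p-\|u_i\|_p^p\ge c_0^p>0$ uniformly in $i$.

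For the $\ell_p$-equivalence, observe that $\sum a_iu_i$ and $\sum a_ig_i$ have disjoint supports, so $\|\sum a_if_i\|_p^p=\|\sum a_iu_i\|_p^p+\|\sum a_ig_i\|_p^p$. The lower estimate follows from disjointness of the $g_i$: $\|\sum a_if_i\|_p\ge\big(\sum_i|a_i|^p\|g_i\|_p^p\big)^{1/p}\ge c_0\|(a_i)\|_p$. For the upper estimate, writing $S_j:=\sum_{i\in B_j}a_i$, disjointness of the $v_j$ and Hölder's inequality on each block give
\[
\Big\|\sum_i a_iu_i\Big\|_p^p=\sum_j\mu_j^p|S_j|^p\le\sum_j\mu_j^pN_j^{p-1}\,\|(a_i)_{i\in B_j}\|_p^p=\sum_j\|(a_i)_{i\in B_j}\|_p^p=\|(a_i)\|_p^p ,
\]
since $\mu_j^pN_j^{p-1}=1$ by the choice of $\mu_j$, while $\|\sum a_ig_i\|_p^p\le\|f\|_p^p\,\|(a_i)\|_p^p$. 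Hence $(f_i)$ is equivalent to the unit vector basis of $\ell_p$, so $X:=X_p(f,(-3^i))$ is isomorphic to $\ell_p$, in particular reflexive with a shrinking basis $(f_i)$.

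Finally, to see $T_I$ is not compact, fix $n$, pick $j$ with $\min B_j>n$, and set $y:=\sum_{i\in B_j}N_j^{-1/p}f_i$. Its coefficient vector has $\ell_p$-norm $1$, so by the estimates above $c_0\le\|y\|_X\le C$ for fixed constants, whereas
\[
T_I y=y|_{[0,1]}=\sum_{i\in B_j}N_j^{-1/p}u_i=\big(N_j^{1-\frac1p}\mu_j\big)\,v_j=v_j
\]
because $N_j^{1-\frac1p}\mu_j=1$, so $\|T_I y\|_p=1$. Thus the normalised vectors $y/\|y\|_X\in\overline{\Span}\{f_i:i>n\}$ are mapped by $T_I$ to vectors of norm $\ge1/C$; as $n\to\infty$ they tend weakly to $0$ (the basis $(f_i)$ being shrinking), so $T_I$ is not completely continuous and hence not compact. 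The delicate point is the simultaneous calibration behind the last two displays: the $u_i$ in one block must all be the \emph{same} direction $v_j$ (so a normalised block sum of them has $L_p$-norm of order $N_j^{1-1/p}\mu_j\gg\mu_j$, not the $\ell_p$-order $\mu_j$), the directions $v_j$ must be disjointly supported across blocks (to preserve the global upper $\ell_p$ estimate), and the scalars must satisfy $\mu_j^pN_j^{p-1}=1$, $N_j^{1-1/p}\mu_j=1$ and $\sum_j N_j\mu_j^p<\infty$ at once — which is exactly where the hypothesis $p>2$ enters.
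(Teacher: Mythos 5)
Your proof is correct and follows essentially the same route as the paper's: translations by $3^i$, blocks in which each $f_i$ restricted to $[0,1]$ is a small multiple of one fixed disjointly supported function, disjointly supported tails giving the lower $\ell_p$ estimate, a H\"older estimate on block sums giving the upper one, and a weakly null sequence of normalised block sums whose restrictions stay bounded below witnessing non-compactness. Your calibration (block size $N_j$, coefficient $N_j^{-1/q}$) is just a reparameterization of the paper's (block size $\lceil N_j^q\rceil$, coefficient $N_j^{-1}$).
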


\begin{proof}
Let $\frac1p +\frac1q =1$ and let $(N_j)_{j=1}^\infty$ be a subsequence of $\N$
satisfying $\sum_{j=1}^\infty N_j^{q-p} < \infty$.
Set $m_j =\lceil N_j^q\rceil$ for $j\in \N$ and let $(x_j)_{j=1}^\infty$ be a normalized sequence
of disjointly supported elements in $L_p(I)$.
Let $(J_j)_{j=1}^\infty$ be a partition of $\N$ into successive intervals so that
$|J_j| = m_j$ for all $j$.

For $i\in J_j$, let $x_i^j$ be $x_j$ placed on the interval $[3^i,3^i+1]$ by right translation
of $3^i$ units.
Define
$$f = \sum_{j=1}^\infty \sum_{i\in J_j} \frac1{N_j} \, x_i^j\ .$$
Note that
$$\|f\|_p^p = \Big\| \sum_{j=1}^\infty \sum_{i\in J_j}  \frac1{N_j} x_i^j\Big\|_p^p
= \sum_{j=1}^\infty m_j \frac1{N_j^p}
\le 2\sum_{j=1}^\infty \frac{N_j^q}{N_j^p} < \infty$$
so $f\in L_p(\R)$.

Setting $f_i = T_{-3^i} f$, for $i\in \N$, we have, as in the proof of Theorem~\ref{thm:3.1},
\begin{equation}\label{eq:P6}
f_i = \frac1{N_j} \, x_j +g_i\ ,\quad \text{for }\ i\in J_j\ ,
\end{equation}
where the $g_i$'s are disjointly supported, seminormalized and with supports disjoint
from $I$.
$(g_i)$ is thus equivalent to the unit vector basis of $\ell_p$.

To see that $(f_i)$ is equivalent to the unit vector basis of $\ell_p$, by \eqref{eq:P6}
it is sufficient to prove that for all $(a_i)_{i=1}^\infty \in \ell_p$,
\begin{equation}\label{eq:P6b}
\Big\| \sum_{i=1}^\infty a_i f_i\big|_I \Big\|_p
\le 2 \bigg( \sum_{i=1}^\infty |a_i|^p\bigg)^{1/p}\ .
\end{equation}

First note that for $j\in \N$,
$$\frac1{N_j} \Big| \sum_{i\in J_j} a_i\Big|
\le \frac1{N_j} \bigg( \sum_{i\in J_j} |a_i|^p\bigg)^{1/p} m_j^{1/q}
\le2 \bigg(\sum_{i\in J_j} |a_i|^p \bigg)^{1/p}\ .$$
Hence
$$\Big\| \sum_{i=1}^\infty a_i f_i\big|_I\Big\|_p^p
= \Big\| \sum_{j=1}^\infty \sum_{i\in J_j} a_i\, \frac1{N_j}\, x_j\Big\|_p^p
= \sum_{j=1}^\infty \Big| \sum_{i\in J_j} a_i \frac1{N_j}\Big|^p
\le 2 \sum_{i=1}^\infty |a_i|^p\ ,$$
which proves \eqref{eq:P6b}.

To see that $T_I$ is not compact, define $y_j = \sum_{i\in J_j} f_i$.
$\|y_j\|$ is of the order $m_j^{1/p}$ and $\| y_j |_I\| = \| \sum_{i\in J_j} \frac1{N_j} x_j\|
= \frac{m_j}{N_j} \ge m_j^{1/p}$.
Thus $m_j^{-1/p} y_j$ is seminormalized and weakly null in $L_p(\R)$, but
$\|T_I m_j^{-1/p} y_j\|_p \ge 1$ for all $j$.
\end{proof}

Using much the same argument we have

\begin{prop}\label{P:7}
Let $2<p<\infty$.
There exists $f\in L_p(\R)$ and translations of $f$, $f_i = T_{-3^i} f$, $i\in \N$, so that
\begin{itemize}
\item[i)] $(f_i)$ is basic,
\item[ii)] $L_p(\R)$ embeds isomorphically into $X_p (f,(-3^i))$,
\item[iii)] $(f_i)$ can be blocked into an unconditional {\rm FDD}.
\end{itemize}
\end{prop}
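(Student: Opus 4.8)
The plan is to repeat the construction in the proof of Proposition~\ref{P:6} essentially verbatim, the only change being that the disjointly supported normalized sequence $(x_j)$ in $L_p(I)$ there is replaced by the normalized Haar basis $(h_j)$ of $L_p[0,1]$; this is precisely what forces a copy of $L_p(\R)$ into the span of the translates. So, with $\tfrac1p+\tfrac1q=1$, choose a subsequence $(N_j)$ of $\N$ (with $N_1\ge 2$, say) such that $\sum_jN_j^{q-p}<\infty$, set $m_j=\lceil N_j^q\rceil$, and partition $\N$ into successive intervals $(J_j)_{j=1}^\infty$ with $|J_j|=m_j$. For $i\in J_j$ let $h_j^i$ be the $j$-th normalized Haar function of $L_p[0,1]$ right-translated onto $[3^i,3^i+1]$, put $f=\sum_j\sum_{i\in J_j}N_j^{-1}h_j^i$ and $f_i=T_{-3^i}f$. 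Exactly as in the proofs of Proposition~\ref{P:6} and Theorem~\ref{thm:3.1} one gets $\|f\|_p^p=\sum_jm_jN_j^{-p}\le 2\sum_jN_j^{q-p}<\infty$, so $f\in L_p(\R)$, and the exponential separation of the centres $3^i$ yields, for $i\in J_j$,
\[
f_i=N_j^{-1}h_j+g_i\ ,
\]
where the $g_i$ are pairwise disjointly supported, seminormalized, and supported off $[0,1]$ (hence each $g_i$ is disjoint from every $h_\ell$).

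For (ii) I would set $\bar h_j=\tfrac{N_j}{m_j}\sum_{i\in J_j}f_i=h_j+w_j$, where $w_j=\tfrac{N_j}{m_j}\sum_{i\in J_j}g_i$. The functions $w_j$ are pairwise disjointly supported and disjoint from $[0,1]$, and $\|w_j\|_p=\tfrac{N_j}{m_j}\bigl(\sum_{i\in J_j}\|g_i\|_p^p\bigr)^{1/p}\le\tfrac{N_j}{m_j^{1/q}}\|f\|_p\le\|f\|_p$ since $m_j^{1/q}\ge N_j$. Because the $h_\ell$ all live on $[0,1]$ and (as $p>2$) the normalized basic sequence $(h_j)$ admits a lower $\ell_p$-estimate with some constant $C_p$, disjointness gives, for finitely supported scalars $(a_j)$,
\[
\Bigl\|\sum_ja_jh_j\Bigr\|_p\le\Bigl\|\sum_ja_j\bar h_j\Bigr\|_p=\Bigl(\Bigl\|\sum_ja_jh_j\Bigr\|_p^p+\sum_j|a_j|^p\|w_j\|_p^p\Bigr)^{1/p}\le\bigl(1+\|f\|_p^pC_p^p\bigr)^{1/p}\Bigl\|\sum_ja_jh_j\Bigr\|_p\ .
\]
Hence $(\bar h_j)$ is equivalent to $(h_j)$, so $\overline{\Span}\{\bar h_j:j\in\N\}\subseteq X:=X_p(f,(-3^i))$ is isomorphic to $\overline{\Span}\{h_j:j\in\N\}=L_p[0,1]\cong L_p(\R)$, proving (ii).

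For (i) and (iii), put $E_j=\Span\{f_i:i\in J_j\}$. A finitely supported $x=\sum_jx_j$ with $x_j=\sum_{i\in J_j}a_if_i=\alpha_jh_j+v_j$ (where $\alpha_j=N_j^{-1}\sum_{i\in J_j}a_i$ and $v_j=\sum_{i\in J_j}a_ig_i$) satisfies, since the $h_\ell$ live on $[0,1]$ and the $v_j$ are disjointly supported off $[0,1]$,
\[
\Bigl\|\sum_j\theta_jx_j\Bigr\|_p^p=\Bigl\|\sum_j\theta_j\alpha_jh_j\Bigr\|_p^p+\sum_j\|v_j\|_p^p\qquad\text{for all signs }(\theta_j)\ ,
\]
and (noting also that the $E_j$ are linearly independent, again by disjointness) combining this with the unconditionality of the Haar basis in $L_p$ shows that $(E_j)_{j=1}^\infty$ is an unconditional FDD for $X$; this is (iii). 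For (i) it remains to note that within each $E_j$ the partial sums of $(f_i)_{i\in J_j}$ in the natural order are uniformly bounded: for $i_0\in J_j$,
\[
\Bigl\|\sum_{i\le i_0}a_if_i\Bigr\|_p^p=\Bigl|N_j^{-1}\sum_{i\le i_0}a_i\Bigr|^p+\sum_{i\le i_0}|a_i|^p\|g_i\|_p^p\ ,
\]
whose first term is at most $(m_j^{1/q}/N_j)^p\sum_{i\in J_j}|a_i|^p\le 2^p\sum_{i\in J_j}|a_i|^p$ by H\"older (using $m_j\le 2N_j^q$), whose second term is at most $\|\sum_{i\in J_j}a_if_i\|_p^p$, while $\sum_{i\in J_j}|a_i|^p\lesssim\|\sum_{i\in J_j}a_if_i\|_p^p$ since $(g_i)$ is seminormalized. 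Thus $(f_i)_{i\in J_j}$ is a basis of $E_j$ with partial-sum bound independent of $j$, and together with the FDD property this gives that $(f_i)_{i\in\N}$, in its natural order, is basic, proving (i).

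The only load-bearing point is the calibration $m_j\asymp N_j^q$. In the form $m_j\le 2N_j^q$ it makes $m_j^{1/q}/N_j$ bounded, which is exactly what controls the within-block partial sums and hence gives the basicity in (i); in the form $m_j^{1/q}\ge N_j$ it keeps $\|w_j\|_p$ bounded, which is what makes $(\bar h_j)$ equivalent to $(h_j)$ in (ii). This is also why the lighter scaling $N_j^{-1/2}$ of Theorem~\ref{thm:3.1} does not suffice here for (i): with it the relevant $h_j$-coefficient of a block partial sum can grow like $N_j^{1/q-1/2}\to\infty$, so $(f_i)$ need no longer be basic, even though the blocked unconditional FDD still exists.
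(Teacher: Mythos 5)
Your proposal is correct and follows essentially the same route as the paper's own (sketched) proof: the same function $f=\sum_j\sum_{i\in J_j}N_j^{-1}h_j^i$ with $|J_j|=m_j=\lceil N_j^q\rceil$, the decomposition $f_i=N_j^{-1}h_j+g_i$ with disjointly supported seminormalized $g_i$, normalized block sums equivalent to the Haar basis for ii), the containment of $E_j$ in $\Span\{h_j,(g_i)_{i\in J_j}\}$ for the unconditional FDD in iii), and the uniform equivalence of $(f_i)_{i\in J_j}$ to the $\ell_p^{m_j}$ basis (via the same H\"older estimate as in Proposition~\ref{P:6}) for i). You merely spell out details the paper leaves implicit, and your closing remark about the calibration $m_j\asymp N_j^q$ correctly identifies why this scaling, rather than that of Theorem~\ref{thm:3.1}, is needed for basicity.
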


\begin{proof}[Sketch]
Let $(h_j)$ be the normalized Haar basis for $L_p [0,1]$.
For $i,j\in \N$, let $h_i^j$ be $h_j$ translated to $[3^i,3^i+1]$.
Set $f= \sum_j \sum_{i\in J_j} \frac1{N_j} h_i^j$ where $|J_j| = m_j \equiv \lceil N_j^q\rceil$
and $(J_j)$ partitions $\N$ into successive intervals.
As above $f_i = \frac1{N_j} h_j + g_i$, for $i\in J_j$, where $(g_i)$ is seminormalized
and disjointly supported in $\R\setminus [0,1]$.

If $y_j = \sum_{i\in J_j} f_i$, it follows that
$m_j^{-1/p} y_j = h_j + e_j$ where $(e_j)$
is seminormalized and disjointly supported in $\R\setminus [0,1]$.
Since $(h_i)$ admits a lower $\ell_p$-estimate, it follows that $(h_j + e_j)$ is equivalent
to $(h_j)$, proving ii).

Set $F_j = \Span \{f_i :i\in J_j\}$ and note that $F_j \subseteq
\overline{F}_j=\Span\{h_j ,(g_i)_{i\in
  J_j}\}$.
Since $(\,\overline{F}_j)$ is an unconditional FDD, so is $(F_j)$.

To see that $(f_i)$ is basic we need only note that $(f_i)_{i\in J_j}$ is uniformly
equivalent, over $j$, to the unit vector basis of $\ell_p^{m_j}$, as demonstrated
in the proof of Proposition~\ref{P:6}.~\end{proof}
\medskip

\section{Open problems}\label{S6}

We end with a collection of remaining open problems.

\begin{probs}\label{probs3:8}
Let $f\in L_p (\R)$ and let $(f_i)$ be a sequence of translates of $f$.
\begin{itemize}
\item[i)] For $1<p<\infty$, can $(f_i)$ ever be a basis of $L_p(\R)$?
\item[ii)] For $1<p<2$, can $(f_i)$ ever be basic such that $L_p$ embeds into
$\overline{\Span (f_i)}$?
\item[iiI)] For $1<p<\infty$, can $(f_i)$ ever be blocked into an
(unconditional) FDD for $L_p(\R)$?

\end{itemize}
\end{probs}

\vskip.2in
 \linespread{1.2}
%%%%%%%%%%%%%%%%%%%

\end{document}